\theoremstyle{definition}
\newtheorem{definition}{Definition}[section]
\theoremstyle{plain}
\newtheorem{theorem}{Theorem}[section]
\newtheorem{lemma}{Lemma}[section]
\newtheorem{proposition}{Proposition}[section]
\theoremstyle{remark}
\newtheorem{corollary}{Corollary}
\newcommand\id{\mathrm{Id}}
\title{Bonded braids and the Markov theorem}
\author[1]{Paolo Cavicchioli}
\author[2]{Boštjan Gabrovšek}
\author[3]{Matic Simonič}
\affil[1]{Institute of Mathematics, Physics and Mechanics, Jadranska ulica 19,1000 Ljubljana, Slovenia. Email adress, P. Cavicchioli: }
\affil[2]{ Rudolfovo, Podbreznik 15, 8000 Novo mesto, Slovenia; University of Ljubljana, Faculty
of Education, Kardeljeva ploščad 16, 1000 Ljubljana, Slovenia.
Email address, B. Gabrovšek: bostjan.gabrovsek@pef.uni-lj.si} 
\affil[3]{University of Ljubljana, Faculty of Mathematics and Physics, Jadranska ulica 19, 1000
Ljubljana, Slovenia.
Email address, M. Simonič: matic.simonic@fmf.uni-lj.si
}
\date{July 2025}
\begin{document}

\maketitle

\begin{abstract}
   Bonded knots arise naturally in topological protein modeling, where intramolecular interactions such as disulfide bridges stabilize folded configurations. These structures extend classical knot theory and have been formalized as bonded knots. In this paper, we develop the algebraic theory of bonded braids, introducing the bonded braid monoid in the topological and rigid settings, which encodes both classical crossings and (rigid) bonded connections. We prove bonded analogues of the Alexander and Markov theorems, establishing that every bonded knot arises as the closure of a bonded braid and that two bonded knots are equivalent if and only if their braid representatives are related by a finite sequence of algebraic (Markov-like) moves. In addition, we define the bonded Burau and reduced bonded Burau representations of the monoid, extending classical braid group representations to the bonded setting, and analyze their (non-)faithfulness in low and high dimensions.
\end{abstract}

\section{Introduction}
Proteins can fold into entangled conformations, where the spatial arrangement of the backbone chain forms a nontrivial mathematical knot \cite{Mansfield1994, virnau2006intricate, dabrowski2019knotprot, dabrowski2017tie}.
Intramolecular bonds in proteins, such as disulfide bridges, give rise to complex entanglements that cannot be captured by classical knot theory. To model such structures, recent work has extended knot theory to the theory of bonded knots \cite{adams2020knot, gabrovvsek2021invariant, gabrovsek2025bracket, Goundaroulis2017, gugumcu2022invariants, dabrowski2024theta, bruno2024knots, sulkowska2020folding}. In these models, bonds are represented as embedded arcs connecting points on a knot (or open curve), allowing one to topologically describe folding and entanglement phenomena in biological macromolecules (see \autoref{fig:valvet}).

\begin{figure}[h]
    \centering
\includegraphics[width=0.75\textwidth, page=1]{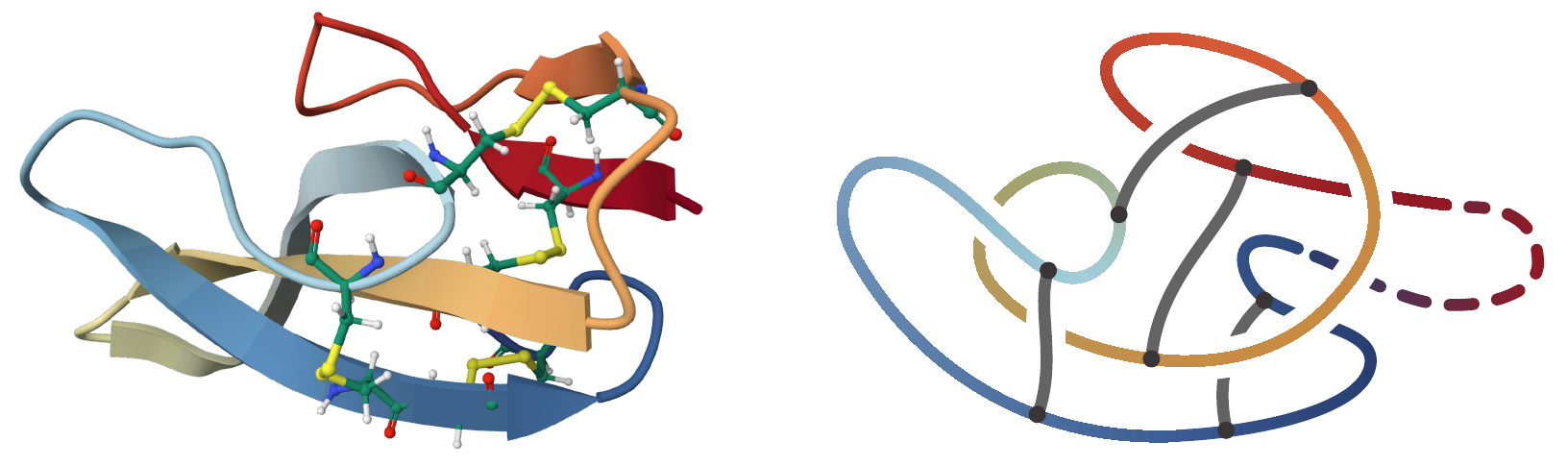}
    \caption{Left: 3D ribbon model of the FS2 toxin peptide (pdb 1tfs). Right: corresponding bonded knot diagram with a direct closure. The multiple disulfide bonds stabilize the compact, active conformation of the toxin. }
    \label{fig:valvet}
\end{figure}

\subsection{Bonded knots}

A (topological) \emph{bonded knot} is a pair $(L, \mathbf{b})$, where
$K$ is an oriented knot embedded in $S^3$ (or $\mathbb{R}^3$)
and $\mathbf{b} = \{b_1, b_2, \ldots, b_n\}$ is a collection of pairwise disjoint \emph{bonds} (closed intervals) embedded in $S^3$, each with endpoints on $K$, i.e., $K \cap b_i = \partial b_i$.
The endpoints of the bonds define trivalent vertices on the knot, and bonded knots may thus be viewed as edge-colored spatial graphs.

\noindent \emph{Remark.} For simplicity, we refer to individual knotted components as ``knots'' throughout the paper, but all constructions and theorems hold without modification in the setting of links.

A \emph{bonded knot diagram} is a regular planar projection of the spatial graph, with transverse double points (crossings) and over/under information included (see \autoref{fig:valvet}).

Two bonded knots $K_1$ and $K_2$ are said to be ambient isotopic if there exists a smooth isotopy $H: S^3 \times [0,1] \to S^3$ such that $H_0 = \text{id}_{S^3}$ and $H_1(K_1) = K_2$. This isotopy is generated by Reidemeister moves I–V as the following theorem states.

\begin{theorem}[\cite{gabrovvsek2021invariant, Kauffman1989}]
Two bonded knots are ambient isotopic if and only if their diagrams are related by a finite sequence of Reidemeister moves I--V depicted in \autoref{fig:reid}.
\end{theorem}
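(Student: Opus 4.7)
The plan is to identify a bonded knot with a specially decorated $4$-valent spatial graph and then appeal to the Reidemeister-type theorem for spatial graphs. At each endpoint of a bond, three edges meet: the incoming knot arc, the outgoing knot arc (distinguished by the orientation of $K$), and the bond itself. Equivalence of bonded knots as defined in the statement coincides with ambient isotopy of the underlying spatial graph that preserves this edge-labeling (knot-edge vs.\ bond-edge), so the theorem reduces to a Reidemeister theorem for such decorated graphs.

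The ``if'' direction is immediate: each move in Figure \ref{fig:reid} is supported in a small $3$-ball and extends by the identity to an ambient isotopy of $S^3$, so any finite composition of such local moves preserves the ambient isotopy class of the bonded knot.

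For the ``only if'' direction I would first approximate the given smooth ambient isotopy by a piecewise-linear one (or keep it smooth and work generically), and then put the one-parameter family of projected diagrams in general position with respect to the projection plane. Away from bond endpoints this is the classical setting and the finitely many critical parameter values contribute moves I, II, III. Near each bond endpoint, at a generic time nothing happens, but at finitely many critical times a local rearrangement is forced: either a strand passes over or under the bond, or the three edges incident at the trivalent vertex rotate past one another in the projection. Recording each such transition produces moves of type IV and V.

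The main obstacle is the \emph{completeness} of the move list: one must check that every sign, over/under, and orientation configuration that can occur near a trivalent bond vertex is either drawn in Figure \ref{fig:reid} or is a consequence of a drawn move together with Reidemeister moves I--III. This case-analysis at the vertex is the technical heart of the argument, and is precisely what is carried out (in slightly different language, for general spatial graphs and for bonded knots respectively) in \cite{Kauffman1989} and \cite{gabrovvsek2021invariant}; a self-contained proof would consist in making the correspondence between those generating sets and the specific moves IV and V explicit, and verifying that the remaining variants can be reduced by Reidemeister I--III.
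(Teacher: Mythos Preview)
The paper does not prove this theorem at all: it is stated with a citation to \cite{Kauffman1989, gabrovvsek2021invariant} and no argument follows. Your sketch is therefore strictly more than what the paper supplies, and its overall shape---view the bonded knot as an edge-colored spatial graph, put a one-parameter family of projections in general position, and classify the local catastrophes away from and near the trivalent vertices---is precisely the strategy carried out in those references.

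One small correction: you write ``$4$-valent spatial graph,'' but as you yourself observe in the next sentence, exactly three edges meet at each bond endpoint (two knot arcs and the bond), so the underlying spatial graph is \emph{trivalent}, in agreement with the paper's own description. Beyond that slip, your outline is sound; the only substantive work you defer is the case-analysis showing that every over/under and sign variant near a vertex is generated by the listed moves IV, IV$'$, V together with I--III, and you correctly flag this as the point where one must either invoke \cite{Kauffman1989, gabrovvsek2021invariant} or reproduce their enumeration.
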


\begin{figure}[ht]
    \centering
    \begin{subfigure}[b]{0.3\textwidth}\centering
        \includegraphics[scale=1.0, page=1]{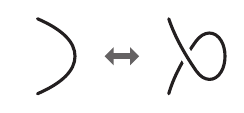}
        \caption{move I}
    \end{subfigure}
    \begin{subfigure}[b]{0.3\textwidth}\centering
        \includegraphics[scale=1.0, page=2]{reid.pdf}
        \caption{move II}
    \end{subfigure}
        \begin{subfigure}[b]{0.3\textwidth}\centering
        \includegraphics[scale=1.0, page=3]{reid.pdf}
        \caption{move III}
    \end{subfigure}

    \vspace{0.5cm} 
    
    \begin{subfigure}[b]{0.3\textwidth}\centering
        \includegraphics[scale=1.0, page=4]{reid.pdf}
        \caption{move IV}
    \end{subfigure}
    \begin{subfigure}[b]{0.3\textwidth}\centering
        \includegraphics[scale=1.0, page=5]{reid.pdf}
        \caption{move IV'}
    \end{subfigure}
        \begin{subfigure}[b]{0.3\textwidth}\centering
        \includegraphics[scale=1.0, page=6]{reid.pdf}
        \caption{move V}
    \end{subfigure}
    \caption{Reidemeister moves for bonded knots. Any part of the depicted arcs can be either a knot arc or a band arc.}
 \label{fig:reid}
\end{figure}

If we restring the twisting at vertices, we obtain a class of rigid \emph{rigid-vertex bonded knots}, where move V is replaced by the rigid move $V_{R}$ (or $V_{R}'$) depicted in \autoref{fig:rigid}, see \cite{gabrovvsek2021invariant, Kauffman1989}.

To distinguish explicitly between the rigid and non-rigid settings, we refer to non-rigid-vertex bonded knots as \emph{topological} bonded knots.

\begin{definition}
Two rigid-vertex bonded knots are rigid-vertex isotopic if their diagrams are related by a finite sequence of Reidemeister moves I--IV and $\mathrm{V_R}$.
\end{definition}

\begin{figure}[ht]
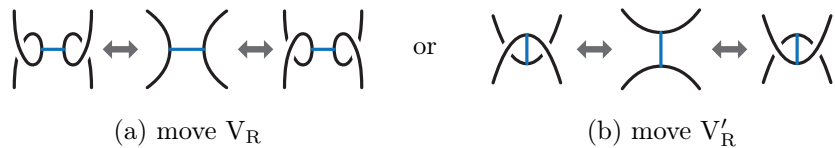

\centering
\begin{tabular}{ccc}
\includegraphics[scale=1.0,page=7]{reid.pdf} & \raisebox{2em}{or} & \includegraphics[scale=1.0,page=8]{reid.pdf} \\
(a) move $\mathrm{V_R}$ & & (b) move $\mathrm{V_R'}$
\end{tabular}
\caption{Rigid versions of the move V. It was shown in \cite{gabrovvsek2021invariant} that the move $\mathrm{V_R}$ is a consiquence of the move $\mathrm{V_R'}$.}
\label{fig:rigid}
\end{figure}

In order to simplify computations, we often consider bonded knot diagrams, where the bonds are locally trivial, i.e. they are contained inside an arbitrary small 2-disk without crossings. Such diagrams have been studied in various contexts, including \cite{adams2020knot, gabrovvsek2021invariant, gabrovsek2025bracket, Goundaroulis2017, dabrowski2018aps}. If a bonded knot diagram does not have crossings on its bonds, we call such a diagram \emph{a diagram with isolated bonds}. A bonded knot can always be represented by a diagram with isolated bonds, since any bond can be contracted via VI-moves to an arbitrarily small bond without crossings, as illustrated in \autoref{fig:isolate_bond}, which shows the following lemma.

\begin{figure}[ht]
    \centering
    \includegraphics[width=0.8\textwidth, page=2]{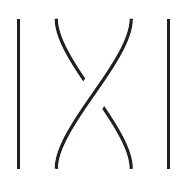}
    \caption{Isolating bonds via VI-moves.}
    \label{fig:isolate_bond}
\end{figure}

\begin{lemma}
    Given a bonded knot diagram $D$ of $K$, there exists a finite sequence of Reidemeister moves that takes $D$ into a diagram with isolated bonds, $D'$. 
\end{lemma}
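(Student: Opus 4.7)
The plan is to proceed by induction on a complexity measure that counts crossings lying on bonds. I would define $N(D)$ to be the number of crossings in $D$ at which at least one of the two strands is a bond, with bond--bond crossings counted with multiplicity two. When $N(D) = 0$, the diagram $D$ already has isolated bonds and there is nothing to prove, so this serves as the base case.

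For the inductive step with $N(D) > 0$, I would choose a bond $b$ and a crossing $c$ lying on $b$. First I would use a planar isotopy of the crossing strand, realised by a finite sequence of Reidemeister III moves to commute past any intervening crossings along $b$, in order to push $c$ toward one of the two endpoints of $b$. Once $c$ is adjacent to the trivalent vertex at that endpoint, a single application of Reidemeister move IV (or IV', depending on the over/under information and local orientation of the strands), as pictured in \autoref{fig:isolate_bond}, transports $c$ off $b$ and onto the knot arc immediately beyond the vertex.

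I expect this single operation to decrease $N(D)$ by exactly one: a knot--bond crossing disappears from the bond entirely, while a bond--bond crossing is converted into a knot--bond crossing on the other bond, losing one unit of multiplicity. Iterating the procedure at most $N(D)$ times therefore produces a diagram $D'$ with isolated bonds, connected to $D$ by the concatenation of finitely many Reidemeister moves of types III and IV (or IV').

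The main subtlety will be the bond--bond case: one must be careful that pushing a crossing off one bond does not spoil the progress already made on another bond, and that the sliding isotopy does not introduce new bond crossings through a Reidemeister II or III move along the way. The weighted complexity $N(D)$ is designed precisely to handle the first issue, since each push strictly decreases the number of bond--crossing incidences no matter how the crossings rearrange among the bonds. For the second issue, I would choose the isotopy sliding $c$ along $b$ so that it remains in a thin regular neighbourhood of the bond arc between $c$ and the chosen endpoint, so that any crossings it meets are already counted on $b$ and are disposed of by the same procedure.
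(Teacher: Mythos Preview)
Your proposal is correct and follows essentially the same approach as the paper: push crossings off each bond through a trivalent vertex using Reidemeister~IV (or IV') moves, as in \autoref{fig:isolate_bond}. The paper does not give a formal argument beyond that figure, so your inductive framework with the weighted complexity $N(D)$ is a welcome formalisation. One simplification: if at each step you choose $c$ to be the crossing on $b$ \emph{closest} to a vertex, there are no intervening crossings to commute past, and the Reidemeister~III moves become unnecessary---a single IV (or IV') move suffices and $N(D)$ drops by one immediately.
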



From now on, we will assume that all bonded knot diagrams have isolated bonds. 
Reidemeister moves in \autoref{fig:reid} may violate bond isolation.
However, there is always a sequence of Reidmesiter moves, or a combination of Reidemeister moves, that keep the bonds isolated. We formalize this by the following lemma.


\begin{lemma}
    Let $D$ and $D'$ be two topological bonded knot diagrams with isolated bonds. The diagrams represent ambient isotopic topological bonded knots if and only if they are connected through a finite sequence of moves $\mathrm{I_B}$, $\mathrm{II_B}$, $\mathrm{III_B}$, $\mathrm{V_B}$, $\mathrm{VI_B}$, and $\mathrm{VI_B'}$ depicted in \autoref{fig:bonded_reid}.
\end{lemma}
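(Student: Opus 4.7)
The plan is to prove the two directions separately, exploiting the preceding theorem that characterizes ambient isotopy of topological bonded knots via the full set of Reidemeister moves I--V.

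For the ``if'' direction, I would verify that each of the moves $\mathrm{I_B}$, $\mathrm{II_B}$, $\mathrm{III_B}$, $\mathrm{V_B}$, $\mathrm{VI_B}$, $\mathrm{VI_B'}$ is realized by a finite sequence of the classical Reidemeister moves I--V on bonded diagrams, so that applying any of them preserves the ambient isotopy class. The moves $\mathrm{I_B}$, $\mathrm{II_B}$, $\mathrm{III_B}$ are simply the classical moves performed on arcs disjoint from the bonds, and $\mathrm{V_B}$ is move V applied in a neighbourhood of an already isolated bond; these cases are immediate. The sliding-type moves $\mathrm{VI_B}$ and $\mathrm{VI_B'}$, which pass a strand past an entire isolated bond, decompose into two applications of move IV (one for each endpoint of the bond), interpolated by moves II/III on the intermediate arc, as is suggested by \autoref{fig:isolate_bond}.

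For the ``only if'' direction, I would start with a sequence of Reidemeister moves I--V connecting $D$ to $D'$, whose existence is guaranteed by the earlier theorem, and then inductively modify it into a sequence of bonded moves between isolated-bond diagrams. The construction is local: before performing any move in the original sequence, I would first shrink every bond that lies inside the support of the move into an arbitrarily small disk outside the region where the move acts, using the isolation procedure of the previous lemma (itself realized by moves $\mathrm{VI_B}$ and $\mathrm{VI_B'}$). After this contraction, the original move either takes place on pure knot strands (giving an $\mathrm{I_B}$/$\mathrm{II_B}$/$\mathrm{III_B}$ move) or inside the small bond disk (giving a $\mathrm{V_B}$ move), or it consists precisely of sliding a strand over a bond endpoint (giving $\mathrm{VI_B}$ or $\mathrm{VI_B'}$).

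The main technical obstacle is the case analysis for moves IV, IV$'$ and V, which are the only Reidemeister moves in which bond strands explicitly participate. For these I would enumerate the possible local configurations of a bond relative to the affected disk (bond entirely inside, bond partially inside with one endpoint, bond with two endpoints, bond crossing the disk transversely) and check in each case that the contraction procedure combined with $\mathrm{VI_B}$/$\mathrm{VI_B'}$ moves produces exactly the same final isolated-bond diagram as the original move would. A minor but important bookkeeping point is that contracting and later re-expanding a bond must be shown to be achievable within the allowed set, so that no auxiliary move outside $\{\mathrm{I_B},\mathrm{II_B},\mathrm{III_B},\mathrm{V_B},\mathrm{VI_B},\mathrm{VI_B'}\}$ is needed, which again follows from the previous lemma.
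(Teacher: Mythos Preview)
Your outline is correct, but the ``only if'' direction proceeds differently from the paper. The paper does not modify the sequence step by step with local contractions; it argues in one stroke that moves I, II, III restrict to $\mathrm{I_B}$, $\mathrm{II_B}$, $\mathrm{III_B}$, and that the IV moves on any given bond must occur in pairs (since the bond begins and ends with no crossings on it), each pair constituting a single $\mathrm{VI_B}$ move---and similarly IV$'$ pairs into $\mathrm{VI_B'}$. Your contraction-before-each-move scheme is more systematic and makes the intermediate isolated-bond diagrams explicit, but one point in your sketch needs care: a single IV move slides a strand past \emph{one} bond vertex and leaves a crossing on the bond, so it does not by itself become a $\mathrm{VI_B}$ move as your trichotomy suggests. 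The case analysis you anticipate for IV and IV$'$ therefore cannot be resolved purely locally; it must look ahead in the sequence to the matching IV move that removes that crossing, which is exactly the pairing the paper invokes. In short, the paper's argument is terser, while yours trades brevity for a clearer inductive structure---but both ultimately rest on the same parity observation once the IV case is unpacked.
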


\begin{proof} In one direction the proof is trivial. In the other direction, we argue as follows.
    Since $D$ and $D'$ represent ambient isotopic topological bonded knots, they are connected by through a finite sequence of moves I, II, III, IV, and V. Moves $\mathrm{I_B}$, $\mathrm{II_B}$, and $\mathrm{III_B}$ are just restrictions of moves I, II, and III, which do not involve the bond. 
    Note that the move $\mathrm{VI_B}$ is composed of two IV moves.  Since $D $ and $D'$ have isolated bonds we have an even number of IV moves over each bond in the sequence of moves connecting them; hence, IV moves over a bond can be paired into $\mathrm{VI_B}$ moves over the same bond. A similar argument holds for IV' and $\mathrm{VI_B'}$
    We now obtain a finite sequence of moves $\mathrm{I_B}$, $\mathrm{II_B}$, $\mathrm{III_B}$, $\mathrm{V_B}$, $\mathrm{VI_B}, \mathrm{VI_B'}$ that connects $D$ and $D'$.
    
\end{proof}

\begin{figure}[ht]
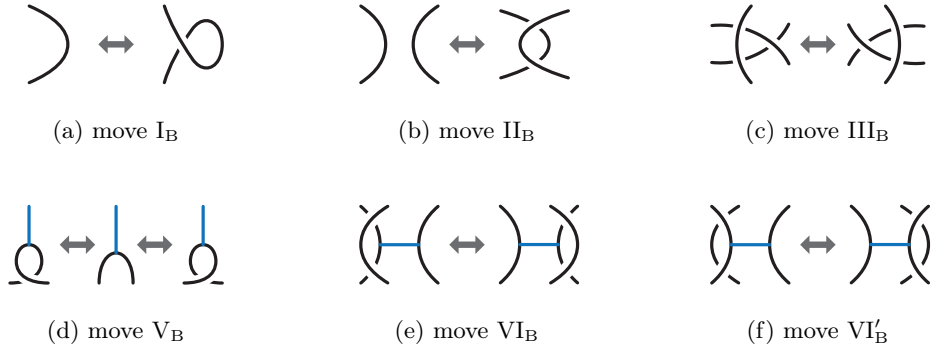

    \centering
    \begin{subfigure}[b]{0.3\textwidth}\centering
        \includegraphics[scale=1.0, page=1]{reid.pdf}
        \caption{move $\mathrm{I_B}$}
    \end{subfigure}
    \begin{subfigure}[b]{0.3\textwidth}\centering
        \includegraphics[scale=1.0, page=2]{reid.pdf}
        \caption{move $\mathrm{II_B}$}
    \end{subfigure}
        \begin{subfigure}[b]{0.3\textwidth}\centering
        \includegraphics[scale=1.0, page=3]{reid.pdf}
        \caption{move $\mathrm{III_B}$}
    \end{subfigure}

    \vspace{0.5cm} 
    \begin{subfigure}[b]{0.3\textwidth}\centering
        \includegraphics[scale=1.0, page=15]{reid.pdf}
        \caption{move $\mathrm{V_B}$}
    \end{subfigure}
    \begin{subfigure}[b]{0.3\textwidth}\centering
        \includegraphics[scale=1.0, page=11]{reid.pdf}
        \caption{move $\mathrm{VI_B}$}
    \end{subfigure}
    \begin{subfigure}[b]{0.3\textwidth}\centering
        \includegraphics[scale=1.0, page=12]{reid.pdf}
        \caption{move $\mathrm{VI'_B}$}
    \end{subfigure}

    \caption{Reidemeister moves for knots with isolated bonds. Note that moves I, II, and III cannot be applied to a strand that represents a bond.}
 \label{fig:bonded_reid}
\end{figure}

A similar lemma can be formed and shown by the same means for the rigid case:

\begin{lemma}
    Let $D$ and $D'$ be two rigid bonded knot diagrams with isolated bonds. The diagrams represent ambient isotopic topological bonded knots if and only if they are connected through a finite sequence of moves $\mathrm{I_B}$, $\mathrm{II_B}$, $\mathrm{III_B}$, $\mathrm{V_R}$, $\mathrm{V_R'}$, $\mathrm{VI_R}$, $\mathrm{VI_R'}$.
\end{lemma}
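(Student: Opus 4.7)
The forward direction is immediate: each of the listed moves is, by definition, realized as an ambient isotopy of the rigid bonded knot, so connecting $D$ and $D'$ by any sequence of these moves produces isotopic rigid bonded knots. My plan for the nontrivial direction is to mirror the argument used for the topological lemma, replacing the move $\mathrm{V}$ throughout by the rigid moves $\mathrm{V_R}$ and $\mathrm{V_R'}$, and keeping the $\mathrm{IV}\to\mathrm{VI}$ pairing argument intact.

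Concretely, I would start from the definition of rigid-vertex isotopy: since $D$ and $D'$ represent ambient isotopic rigid bonded knots, there exists a finite sequence of Reidemeister moves I, II, III, IV (and IV') and $\mathrm{V_R}$ (and $\mathrm{V_R'}$) taking $D$ to $D'$. The moves $\mathrm{I_B}$, $\mathrm{II_B}$, $\mathrm{III_B}$ are the same as I, II, III restricted to knot strands; any instance of I, II, or III in the sequence that is applied away from the bonds is already one of these restricted moves. Because bonds are isolated in both $D$ and $D'$, we may further assume that the I, II, III moves in the sequence are never performed on strands that represent bonds (if such a move temporarily involves a bond strand, it can be delayed or replaced, since the bonds stay isolated at the endpoints of the sequence). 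The rigid moves $\mathrm{V_R}$ and $\mathrm{V_R'}$ are already among the allowed moves, so no replacement is needed for them.

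The crucial step is the treatment of IV and IV' moves, which push a knot arc across a bond and thereby create a crossing on the bond, violating the isolated-bond condition. Here I would use the same parity argument as in the topological case: each IV move over a given bond either creates or removes a single crossing on that bond. Since both $D$ and $D'$ have isolated bonds, the total number of IV moves over each individual bond throughout the sequence must be even. I then pair up consecutive (in the sense of the same bond) IV moves into single $\mathrm{VI_R}$ moves; an analogous pairing converts pairs of IV' moves into $\mathrm{VI_R'}$ moves.

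The main obstacle I anticipate is justifying this pairing rigorously: a priori two IV moves over the same bond need not be adjacent in the global sequence, so one must argue that they can be commuted past intervening moves (other Reidemeister moves happening elsewhere in the diagram) until they become adjacent, and that when adjacent they combine into exactly a $\mathrm{VI_R}$ or $\mathrm{VI_R'}$ configuration (as opposed to cancelling trivially, which is a degenerate case that can be removed from the sequence). This is the same subtlety as in the topological lemma; since the author invokes \emph{the same means}, I would formalize it by induction on the number of IV/IV' moves in the sequence, showing at each step that one can locally modify the sequence to reduce this count by two while replacing the removed pair with a $\mathrm{VI_R}$ or $\mathrm{VI_R'}$ move. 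Once no IV or IV' moves remain, the resulting sequence consists entirely of the moves listed in the statement, completing the proof.
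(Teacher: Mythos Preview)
Your proposal is correct and follows exactly the approach the paper intends: the paper does not give a separate proof for the rigid lemma but simply states that it ``can be formed and shown by the same means'' as the topological one, and your write-up is precisely that transfer---restrict I, II, III to knot strands, keep $\mathrm{V_R}$ and $\mathrm{V_R'}$ as they are, and pair IV (resp.\ IV$'$) moves over each bond into $\mathrm{VI_R}$ (resp.\ $\mathrm{VI_R'}$) moves using the parity argument. Your added discussion of the commutation/pairing subtlety goes beyond what the paper spells out even in the topological case, but it is the natural elaboration of the same idea rather than a different route.
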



\medskip \emph{Remark.} In literature we can find also the move depicted in the top row of \autoref{fig:isolated_moves2}, but this move is a consequence of the moves $\mathrm{II_B}$ and $\mathrm{VI_B}$, as shown in the bottom row of \autoref{fig:isolated_moves2}.

\begin{figure}[ht]
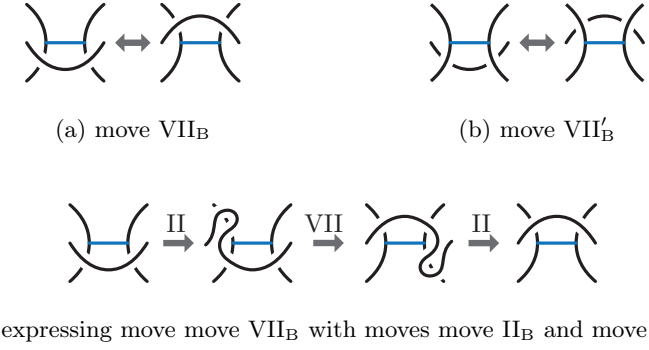

    \centering
    \begin{subfigure}[b]{0.3\textwidth}
        \centering
        \includegraphics[scale=1.0,page=9]{reid.pdf}
        \caption{move $\mathrm{VII_B}$}
    \end{subfigure}
    \qquad
    \begin{subfigure}[b]{0.3\textwidth}
        \centering
        \includegraphics[scale=1.0,page=10]{reid.pdf}
        \caption{move $\mathrm{VII'_B}$}
    \end{subfigure}

    \vspace{0.5cm}

    \makebox[\textwidth]{%
        \begin{subfigure}[b]{\textwidth}
            \centering
            \includegraphics[scale=1.0,page=14]{reid.pdf}
            \caption{expressing move move $\mathrm{VII_B}$ with moves move $\mathrm{II_B}$ and move $\mathrm{VI_B}$}
        \end{subfigure}
    }

    \caption{Top row: a Reidemeister move for an isolated bond, which is a consequence of moves II and VII, as shown in the bottom row.}
    \label{fig:isolated_moves2}
\end{figure}

\section{The bonded braid monoid and group}
In this section, we introduce an algebraic, braid based description of bonded knots by extending classical braid theory to a theory of bonded braids.

\begin{definition}\label{def:mn}
Let $ n \in \mathbb{N} $. We define $ M_n $, the \emph{topological bonded braid monoid} on $ n $ strands, to be the monoid generated by the following elements (see \autoref{fig:gen} for a visual representation of the generators and \autoref{fig:braid-relations} for a visual representation of the relations):
\begin{itemize}
    \item $ \sigma_1^{\pm1}, \sigma_2^{\pm1}, \dots, \sigma_{n-1}^{\pm1},$ \hfill (standard braid generators and their inverses)
    \item $ b_1, b_2, \dots, b_{n-1} $. \hfill (bond generators)
\end{itemize}

\begin{figure}[htbp]
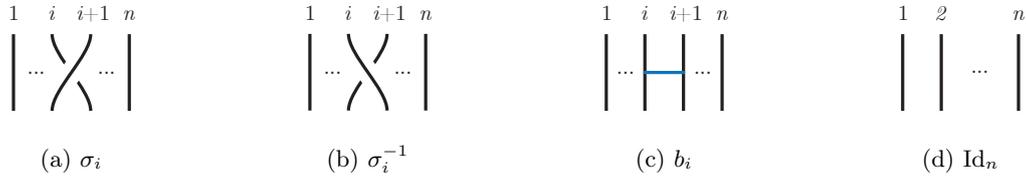

    \centering
    \begin{subfigure}[b]{0.22\textwidth}
        \centering
        \includegraphics[scale=1.0,page=3]{braid.pdf}
        \caption{$ \sigma_i $}
    \end{subfigure} \hfill
    \begin{subfigure}[b]{0.22\textwidth}
        \centering
        \includegraphics[scale=1.0,page=4]{braid.pdf}
        \caption{$ \sigma_i^{-1} $}
    \end{subfigure} \hfill
    \begin{subfigure}[b]{0.22\textwidth}
        \centering
        \includegraphics[scale=1.0,page=5]{braid.pdf}
        \caption{$ b_i $}
    \end{subfigure} \hfill
    \begin{subfigure}[b]{0.22\textwidth}
        \centering
        \includegraphics[scale=1.0,page=33]{braid.pdf}
        \caption{$ \id_n $}
    \end{subfigure}
    \caption{Generators of the topological bonded braid monoid: the braid generator $ \sigma_i $, its inverse $ \sigma_i^{-1} $, the bond generator $ b_i $, and the identity braid $ \id_n $.}
    \label{fig:gen}
\end{figure}

These generators satisfy the following relations:
\begin{itemize}
    \item \textbf{standard braid relations:}
    \begin{align*}
        \sigma_i \sigma_i^{-1} &= \id_n, &&  i = 1, \dots, n-1 \tag{R1} \\
        \sigma_i \sigma_j &= \sigma_j \sigma_i, &&  |i-j| \geq 2 \tag{R2} \\
        \sigma_i \sigma_{i+1} \sigma_i &= \sigma_{i+1} \sigma_i \sigma_{i+1}, &&  i = 1, \dots, n-2 \tag{R3}
    \end{align*}

    \item \textbf{bonded braid relations:}
    \begin{align*}
        b_i b_j &= b_j b_i, &&  |i-j| \geq 2 \tag{B1}
    \end{align*}

    \item \textbf{mixed braid relations:}
    \begin{align*}
        \sigma_i b_j &= b_j \sigma_i, &&  |i-j| \geq 2 \tag{M1} \\
        \sigma_i b_i &= b_i \sigma_i, &&  i = 1, \dots, n-1 \tag{M2} \\
        \sigma_{i+1} \sigma_i b_{i+1} &= b_i \sigma_{i+1} \sigma_i, &&  i = 1, \dots, n-2 \tag{M3} \\
        \sigma_i \sigma_{i+1} b_i &= b_{i+1} \sigma_i \sigma_{i+1}, &&  i = 1, \dots, n-2 \tag{M4}
    \end{align*}
\end{itemize}

A same set of relations also appear in \cite{kauffman1992link} and \cite{birman1993new}.
Note that restricting to the generators $ \sigma_i^{\pm1} $ and the relations \textnormal{(R1)–(R3)} recovers the standard Artin braid group $B_n$, so $B_n$ is a subgroup of $M_n$.
\end{definition}

\begin{figure}[htbp]
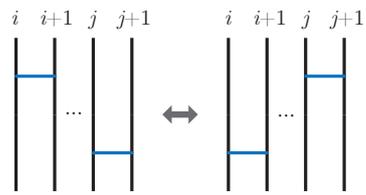
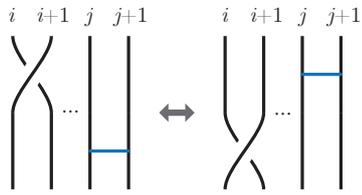
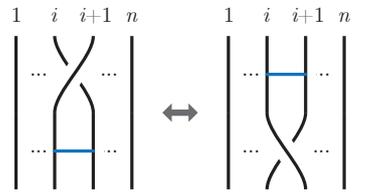
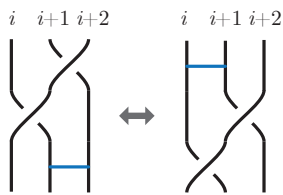
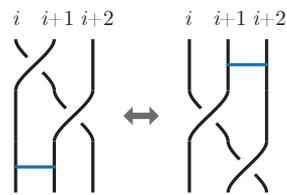

    \centering
    \begin{subfigure}[b]{0.4\textwidth}
    \centering
        \includegraphics[scale=1.0,page=7]{braid.pdf}
        \caption{R1}
    \end{subfigure} \qquad
    \begin{subfigure}[b]{0.4\textwidth}
    \centering
        \includegraphics[scale=1.0,page=8]{braid.pdf}
        \caption{R2}
    \end{subfigure}

    \vspace{0.5cm} 

    \begin{subfigure}[b]{0.4\textwidth}
    \centering
        \includegraphics[scale=1.0,page=10]{braid.pdf}
        \caption{R3}
    \end{subfigure} \qquad
    \begin{subfigure}[b]{0.4\textwidth}
    \centering
        \includegraphics[scale=1.0,page=9]{braid.pdf}
        \caption{B1}
    \end{subfigure}

    \vspace{0.5cm}

        \begin{subfigure}[b]{0.4\textwidth}
    \centering
        \includegraphics[scale=1.0,page=11]{braid.pdf}
        \caption{M1}
    \end{subfigure} \qquad
    \begin{subfigure}[b]{0.4\textwidth}
    \centering
        \includegraphics[scale=1.0,page=12]{braid.pdf}
        \caption{M2}
    \end{subfigure}

    \vspace{0.5cm}

        \begin{subfigure}[b]{0.4\textwidth}
    \centering
        \includegraphics[scale=1.0,page=31]{braid.pdf}
        \caption{M3}
    \end{subfigure} \qquad
    \begin{subfigure}[b]{0.4\textwidth}
    \centering
        \includegraphics[scale=1.0,page=32]{braid.pdf}
        \caption{M4}
    \end{subfigure}

    \caption{Visualization of the relations in the bonded braid monomial.}
    \label{fig:braid-relations}
\end{figure}

A definition of a rigid bonded knot needs to violate the condition that all strands are monotonically descending top-to-bottom, but we will show that this condition can be violated only locally (as in \autoref{fig:ki}), which gives motivation of the following definition of the rigid bonded braid monoid.

\begin{definition}
Let $ n \in \mathbb{N} $. Define $ RM_n $, the \emph{rigid bonded braid monoid} on $ n $ strands, to be the monoid generated by the following elements (see \autoref{fig:generators1} for a visual representation of the generators):
\begin{itemize}
    \item $ \sigma_1^{\pm1}, \sigma_2^{\pm1}, \dots, \sigma_{n-1}^{\pm1}$, \hfill (standard braid generators and their inverses)
    \item $ b_1, b_2, \dots, b_{n-1}$, \hfill (bond generators)
    \item $ k_1, k_2, \dots, k_{n-1}$. \hfill (kink generators)
\end{itemize}

The generators satisfy the following relations:
\begin{itemize}
    \item \textbf{standard braid relations:}
    \begin{align*}
        \sigma_i \sigma_i^{-1} &= \id_n, &&  i = 1, \dots, n-1 \tag{R1} \\
        \sigma_i \sigma_j &= \sigma_j \sigma_i, &&  |i-j| \geq 2 \tag{R2} \\
        \sigma_i \sigma_{i+1} \sigma_i &= \sigma_{i+1} \sigma_i \sigma_{i+1}, &&  i = 1, \dots, n-2 \tag{R3}
    \end{align*}

    \item \textbf{bonded braid and kink relations:}
    \begin{align*}
        b_i b_j &= b_j b_i, &&  |i-j| \geq 2 \tag{B1} \\
        k_i k_j &= k_j k_i, &&  |i-j| \geq 2 \tag{K1} \\
        b_i k_j &= k_j b_i, &&  |i-j| \geq 2 \tag{BK1}
    \end{align*}

    \item \textbf{mixed relations:}
    \begin{align*}
        \sigma_i b_j &= b_j \sigma_i, &&  |i-j| \geq 2 \tag{MB1} \\
        \sigma_i b_i &= b_i \sigma_i, &&  i = 1, \dots, n-1 \tag{MB2} \\
        \sigma_{i+1} \sigma_i b_{i+1} &= b_i \sigma_{i+1} \sigma_i, &&  i = 1, \dots, n-2 \tag{MB3} \\
        \sigma_i \sigma_{i+1} b_i &= b_{i+1} \sigma_i \sigma_{i+1}, &&  i = 1, \dots, n-2 \tag{MB4} \\
        \sigma_i k_j &= k_j \sigma_i, &&  |i-j| \geq 2 \tag{MK1} \\
        \sigma_i k_i &= k_i \sigma_i, &&  i = 1, \dots, n-1 \tag{MK2} \\
        \sigma_{i+1} \sigma_i k_{i+1} &= k_i \sigma_{i+1} \sigma_i, &&  i = 1, \dots, n-2 \tag{MK3} \\
        \sigma_i \sigma_{i+1} k_i &= k_{i+1} \sigma_i \sigma_{i+1}, &&  i = 1, \dots, n-2 \tag{MK4}
    \end{align*}
\end{itemize}
\end{definition}

Again, $B_n$ is a subgroup of $M_n$, which is a submonoid of $RM_n$.

\begin{figure}[htbp]
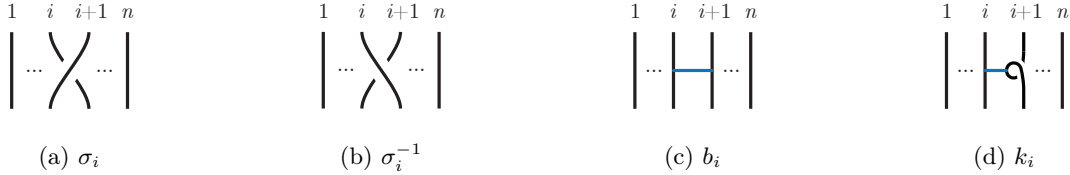

    \centering
    \begin{subfigure}[b]{0.18\textwidth}
        \centering
        \includegraphics[scale=1.0,page=3]{braid.pdf}
        \caption{$ \sigma_i $}
    \end{subfigure} \hfill
    \begin{subfigure}[b]{0.18\textwidth}
        \centering
        \includegraphics[scale=1.0,page=4]{braid.pdf}
        \caption{$ \sigma_i^{-1} $}
    \end{subfigure} \hfill
    \begin{subfigure}[b]{0.18\textwidth}
        \centering
        \includegraphics[scale=1.0,page=5]{braid.pdf}
        \caption{$ b_i $}
    \end{subfigure} \hfill
    \begin{subfigure}[b]{0.18\textwidth}
        \centering
        \includegraphics[scale=1.0,page=22]{braid.pdf}
        \caption{$ k_i $} \label{fig:ki}
    \end{subfigure} 
    \caption{Generators of the rigid bonded braid monoid: the braid generator $ \sigma_i $, its inverse $ \sigma_i^{-1} $, the bond generator $ b_i $, the kink generator $ k_i $.}
    \label{fig:generators1}
\end{figure}

If we complete the monoid $M_n$ by formally adding inverses of elements $b_i$, we obtain the universal group $BB_n$ of $M_n$:

\begin{definition}
Let $ n \in \mathbb{N} $.  The \emph{topological bonded braid group} on $ n $ strands, denoted by $BB_n$, is the group obtained from the topological bonded braid monoid $ M_n $ by adjoining inverses $ b_1^{-1}, b_2^{-1}, \dots, b_{n-1}^{-1} $ to the bond generators $ b_1, b_2, \dots, b_{n-1} $.
\end{definition}

We depict $b_i^{-1}$'s with a dotted line as in \autoref{fig:antigenerators1} and can think of it as an ``anti-bond'' that annihilates a bond.

\begin{definition}
Let $ n \in \mathbb{N} $. The \emph{rigid bonded braid group} on $ n $ strands, denoted by $RB_n$, is the group obtained from the rigid bonded braid monoid $ RM_n $ by adjoining:
\begin{itemize}
    \item inverses $ b_1^{-1}, b_2^{-1}, \dots, b_{n-1}^{-1} $ to the bond generators $ b_1, b_2, \dots, b_{n-1} $, and
    \item inverses $ k_1^{-1}, k_2^{-1}, \dots, k_{n-1}^{-1} $ to the kink generators $ k_1, k_2, \dots, k_{n-1} $ (see \autoref{fig:antigenerators2}).
\end{itemize}
\end{definition}

\begin{figure}[htbp]
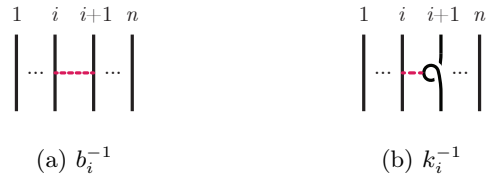

    \centering
    \begin{subfigure}[b]{0.25\textwidth}
        \centering
        \includegraphics[scale=1.0,page=6]{braid.pdf}
        \caption{$ b_i^{-1} $}\label{fig:antigenerators1}
    \end{subfigure} \qquad
    \begin{subfigure}[b]{0.25\textwidth}
        \centering
        \includegraphics[scale=1.0,page=34]{braid.pdf}
        \caption{$ k_i^{-1} $}\label{fig:antigenerators2}
    \end{subfigure}
    \caption{Inverse generators in the bonded braid setting: the inverse bond generator $ b_i^{-1} $ (left), and the inverse kink generator $ k_i^{-1} $ (right).}
    \label{fig:antigenerators}
\end{figure}

\begin{definition}
The \emph{closure} $\hat \beta$ of a bonded braid $\beta \in M_n$ (or $RM_n$) is obtained by connecting the corresponding open top and bottom endpoints of the braid, as depicted in \autoref{fig:closure}. 
\end{definition}

\begin{figure}[ht]
    \centering
    \includegraphics[width=0.7\linewidth, page=24]{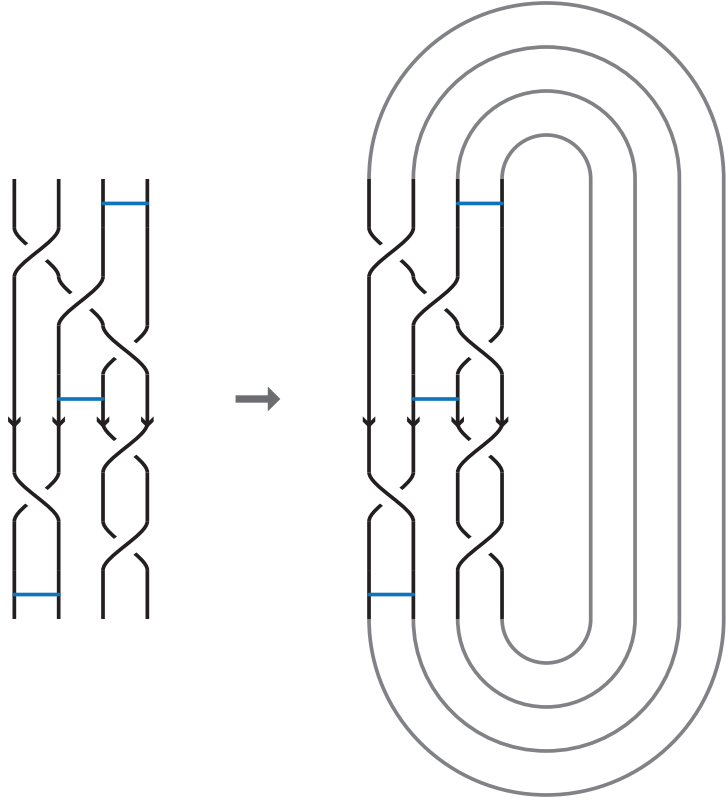}
    \caption{A braid $\beta$ (left) and its closure $\hat \beta$ (right).}
    \label{fig:closure}
\end{figure}

\section{Alexander theorem} \label{sec:alexander}


Before stating the Alexander theorem for bonded knots -- which asserts that every bonded knot can be presented as the closure of a braid -- we note that the most natural setting for its formulation and proof is the piecewise linear (PL) category.

The theory of tame knots is equivalent in both the smooth and piecewise-linear (PL) categories \cite{crowell1977introduction}.
Similarly, the theory of (tame) spatial graphs can be studied only in the PL category \cite{Kauffman1989}. 
Henceforth, we assume all bonded knots are PL, though we will often depict them smoothly for aesthetic purposes.

Let us denote the \emph{convex hull} of points $A_1, A_2, \ldots, A_n$ by $[A_1, A_2, \ldots, A_n]$. PL isotopy is generated by the $\Delta$-move (and its inverse) depicted in \autoref{fig:delta_move} \cite{Kauffman1989}, which is a transformation that replaces a segment $[A_i,A_{i+1}]$ of the PL-decomposition of a knot by two segments, $[A_i, A'_{i}] \cup [A'_{i}, A_{i+1}]$, provided that the triangle $[A_i A'_{i} A_{i+1}]$ does not intersect any other segment of the knot (other than $[A_i, A_{i+1}]$). In short:
$$\Delta_{A_i,A_{i+1}}^{A'_i}: K \rightarrow K \setminus [A_i, A_{i+1}] \cup [A_i, A'_{i}] \cup [A_i',A_{i+1}] \qquad \text{given that} \qquad 
K \cap [A_i, A'_i, A_{i+1}] = [A_i, A_{i+1}].
$$

\begin{figure}[ht]
    \centering
    \includegraphics[width=0.3\linewidth,page=25]{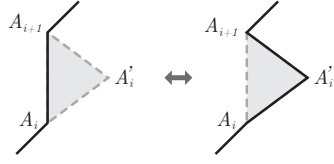}
    \caption{The delta move.}
    \label{fig:delta_move}
\end{figure}

Since bonded knots are special cases of (colored) spatial graphs, we can summarize how isotopy is translated into $\Delta$-moves in the following lemma:

\begin{lemma}[Corollary of Theorem 2.1 in \cite{Kauffman1989}]
Isotopy of topological bonded knots is generated by a finite sequence of $\Delta$-moves represented in \autoref{fig:deltas}.

\end{lemma}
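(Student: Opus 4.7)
The plan is to deduce this lemma as a direct specialization of Kauffman's $\Delta$-move theorem for spatial graphs (Theorem 2.1 of \cite{Kauffman1989}), exploiting the fact that a bonded knot is nothing other than an edge-two-colored trivalent spatial graph, where the two colors correspond to ``knot arcs'' and ``bond arcs''. By Kauffman's theorem, two PL spatial graphs are ambient isotopic if and only if they differ by a finite sequence of $\Delta$-moves performed on the edges of any PL-decomposition, together with subdivisions and their inverses. Since subdivisions are themselves degenerate $\Delta$-moves, the content of the lemma is to enumerate the distinct combinatorial types of $\Delta$-moves that can occur in the two-colored bonded setting and to verify that every color-preserving isotopy can be realized by a sequence drawn from this finite list.

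The first step will be to fix a PL structure on the bonded knot $(K,\mathbf{b})$ in which every endpoint of every bond appears as a vertex of the polygonal decomposition; this ensures that every edge of the decomposition is labeled unambiguously as either a knot-arc edge or a bond-arc edge, and that the trivalent vertices where bonds meet $K$ are recorded. I would then apply Kauffman's theorem to the underlying colored spatial graph to obtain a finite sequence of (colored) $\Delta$-moves between the two given PL representatives of the bonded knot.

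The second step is a case analysis on the position of the triangle $[A_i,A_i',A_{i+1}]$ relative to the coloring. There are only finitely many combinatorial possibilities for how the edge $[A_i,A_{i+1}]$ and the replacement path $[A_i,A_i']\cup[A_i',A_{i+1}]$ can be colored: (i) both lie in knot arcs, giving the classical $\Delta$-move; (ii) both lie in a single bond arc, giving the bond $\Delta$-move; (iii) the edge crosses a trivalent vertex, so that one sub-edge is a knot arc and the other is a bond arc, giving the ``mixed'' $\Delta$-moves at a vertex. Each of these is precisely one of the pictures in \autoref{fig:deltas}, and in each case the defining condition $K\cap[A_i,A_i',A_{i+1}]=[A_i,A_{i+1}]$ of the $\Delta$-move becomes the usual disjointness condition on the replaced triangle.

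The main (minor) obstacle will be the third step: showing that one may always refine a Kauffman $\Delta$-move so that its triangle does \emph{not} straddle a trivalent vertex in an essential way other than the types shown in \autoref{fig:deltas}. This is handled by subdividing the triangle $[A_i,A_i',A_{i+1}]$ into smaller triangles along the vertex, each of which is of one of the three types above; the disjointness condition is preserved under subdivision, so the original move is replaced by a finite sequence of moves from the allowed list. Combining the three steps gives both directions of the ``if and only if'' implicit in the statement: each move in the list is clearly a PL isotopy, and conversely every PL isotopy decomposes into such moves.
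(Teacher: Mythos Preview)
Your proposal is correct and follows exactly the approach the paper takes: the paper gives no explicit proof at all, merely stating the result as a corollary of Kauffman's Theorem~2.1 for spatial graphs, which is precisely the specialization argument you spell out. Your elaboration---fixing a PL structure with bond endpoints as vertices, applying Kauffman's theorem to the underlying colored trivalent graph, and then sorting the resulting $\Delta$-moves by the coloring of the moved edge and its incidence with trivalent vertices---is more detailed than anything in the paper, but entirely in the same spirit.

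One small imprecision worth tightening: in your case~(iii) you speak of an edge that ``crosses a trivalent vertex'', but once the PL decomposition has bond endpoints as $0$-cells no single edge can do that; what you really mean (and what your subdivision argument in step three correctly handles) is a $\Delta$-move whose pivot $A_i$ or $A_{i+1}$ \emph{is} a trivalent vertex, so that the two incident edges carry different colors. With that rephrasing the case analysis matches the five local pictures $\Delta_1,\dots,\Delta_5$ in \autoref{fig:deltas}, which are exactly the PL realizations of the Reidemeister moves I--V.
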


\begin{figure}[ht]
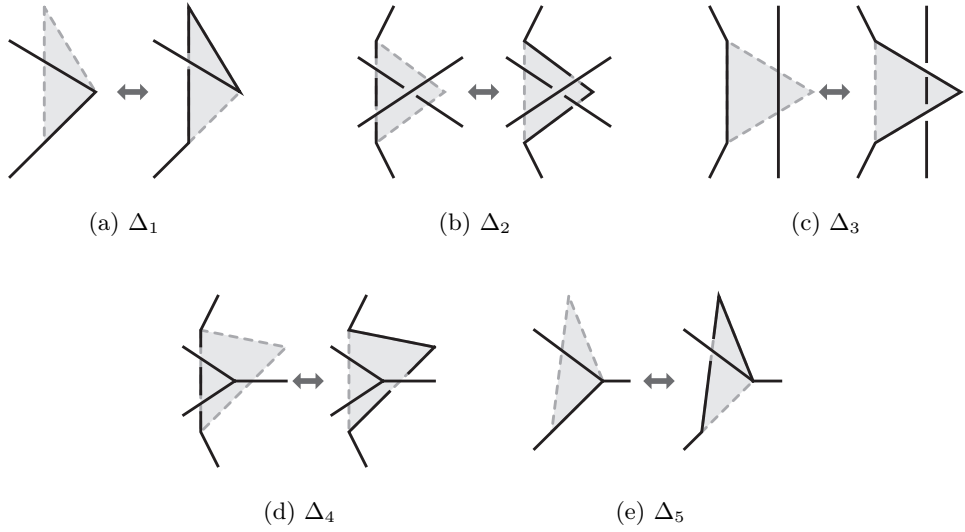

    \centering
    \begin{subfigure}[b]{0.3\textwidth}
        \includegraphics[width=\textwidth, page=26]{braid.pdf}
        \caption{$\Delta_1$}
    \end{subfigure}
    \begin{subfigure}[b]{0.3\textwidth}
        \includegraphics[width=\textwidth, page=27]{braid.pdf}
        \caption{$\Delta_2$}
    \end{subfigure}
    \begin{subfigure}[b]{0.3\textwidth}
        \includegraphics[width=\textwidth, page=28]{braid.pdf}
        \caption{$\Delta_3$}
    \end{subfigure}
    
    \vspace{0.5cm} 
    
    \begin{subfigure}[b]{0.3\textwidth}
        \includegraphics[width=\textwidth, page=29]{braid.pdf}
        \caption{$\Delta_4$}
    \end{subfigure}
    \begin{subfigure}[b]{0.3\textwidth}
        \includegraphics[width=\textwidth, page=30]{braid.pdf}
        \caption{$\Delta_5$}
    \end{subfigure}
    
    \caption{Reidemeister moves of topological bonded knots as $\Delta$-moves in the PL category.}
    \label{fig:deltas}
\end{figure}

If we choose an orientation on the knot $K$, we call such a diagram an \emph{oriented bonded knot diagram}. Note that such a diagram does not specify orientations on the bonds and we will assume they are unoriented, or are given any orientation when necessary.

If, in a given diagram $D$ of a bonded knot $K$, the orientations of the strands adjacent to a bond point in the same direction, we call the bond \emph{parallel}; otherwise, it is \emph{non-parallel} (\autoref{fig:parallel}).  

\begin{figure}[ht]
    \centering
    \includegraphics[width=0.3\linewidth, page=18]{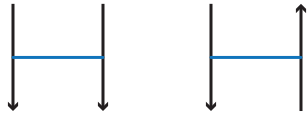}
    \caption{A parallel bond (left) and a non-parallel bond (right).}
    \label{fig:parallel}
\end{figure}

Since the move V reverses the parallelity of a bond, we have the following lemma:
\begin{lemma}
A diagram of an oriented topological bonded knot is equivalent to a diagram of an oriented topological bonded knot a with only parallel bonds.
\end{lemma}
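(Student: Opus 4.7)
The plan is to reduce the lemma to a local operation applied bond-by-bond. Given an oriented topological bonded knot diagram $D$, I would enumerate its bonds $b_1, \ldots, b_n$ and, for each bond whose two adjacent knot strands have opposing orientations (i.e., each non-parallel bond), apply a single move $V$ locally in a small disk containing that bond. Because move $V$ is one of the Reidemeister moves for topological bonded knots (\autoref{fig:reid}), each such replacement produces an ambient isotopic diagram, and the modifications at different bonds can be performed in disjoint disks, so they do not interfere. After finitely many steps every bond is parallel, yielding the desired equivalent diagram.

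The only substantive content is the claim asserted immediately before the lemma statement, namely that move $V$ reverses the parallelity of a bond. To justify it, I would inspect the local picture of $V$ shown in \autoref{fig:reid}: the move exchanges a straight bond-vertex configuration with its $180^{\circ}$ twist about the axis through the bond endpoints. This twist permutes the two strand germs on one side of the bond, which relative to the fixed global orientation of $K$ has the effect of reversing the local direction of one of the emanating strands. Hence the orientations of the two strands at the bond endpoints, which agreed before the move, now disagree (and vice versa), so parallel and non-parallel configurations are interchanged.

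The main and essentially only obstacle is this pictorial verification, which must be read off carefully from the diagram of move $V$. It is worth emphasizing that the argument uses the non-rigid nature of $V$ in an essential way: in the rigid setting the vertex twist is precisely what move $V_R$ forbids, and that is why the lemma is stated only for topological bonded knots. Once the parallelity-reversal of $V$ is accepted, the proof is a one-line finite iteration over the bonds of $D$.
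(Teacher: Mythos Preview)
Your proposal is correct and follows exactly the paper's approach: the paper states the lemma as an immediate consequence of the observation that move~V reverses the parallelity of a bond, and you simply spell out the obvious finite iteration over the non-parallel bonds. Your added remarks about disjoint disks and the contrast with the rigid case are accurate elaborations but not additional content beyond what the paper implicitly uses.
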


Due to the above lemma, we may always assume, in the topological setting, that a diagram contains only parallel bonds.


We call a piecewise linear diagram $D$ of a bonded knot $K$ \emph{braided around a point $x \notin D$} if all segments $[A_i, A_{i+1}]$ of $D$ are oriented anticlockwise with respect to $x$, i.e. $\angle(A_i,x,A_{i+1}) \in (0, \pi)$ and all bond segments $[B_i,B_{i+1}]$ are colinear with $x$, i.e. $[B_i,B_{i+1}]$ lies on a ray from $x$.

If a diagram $D$ is braided around a point $x$, we can cut $D$ along a ray $r$ from $x$ and open the diagram and fold it into a bonded braid. By a general position argument, we can always choose $r$ that it does not intersect crossings or bonds of $D$. 

\begin{figure}
    \centering
    \includegraphics[width=0.9\linewidth, page=19]{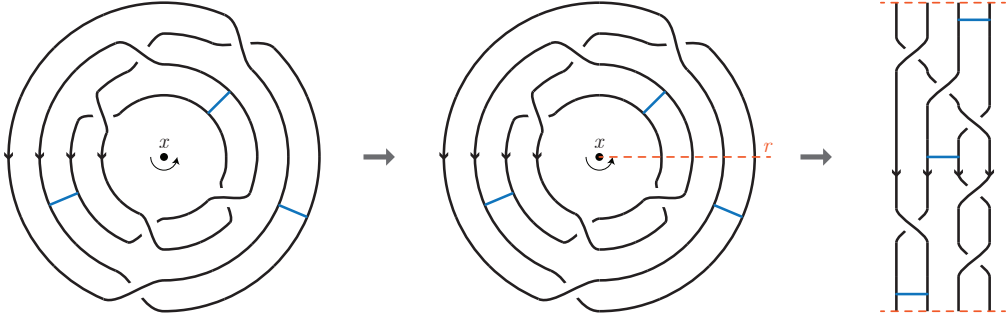}
    \caption{Converting a braided diagram (left) into a braid (right) by cutting the diagram a long a ray (middle). The bonded knot on the left is the closure of the braid on the right, $\beta = b_3 \sigma_1 \sigma_2 \sigma_3^{-1} b_2 \sigma_3 \sigma_1^{-1} \sigma_3 b_1.$}
    \label{fig:braided}
\end{figure}

\begin{theorem}[Alexander's theorem for the topological case]
Every topological bonded knot $K \subset S^3$ can be represented as the closure $\hat \beta$ of an element $\beta$ of the bonded braid monoid $M_n$. \label{thm:topological_akexander}
\end{theorem}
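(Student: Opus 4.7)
The plan is to adapt the classical piecewise-linear Alexander braiding algorithm to the bonded setting, working throughout in the PL category with the $\Delta$-moves of \autoref{fig:deltas}. By the isolated-bonds lemma and the parallel-bonds lemma I may assume the PL diagram $D$ of $K$ has isolated, parallel bonds, and by further $\mathrm{VI}_B$-moves I shrink each bond until it lies in a small open disk disjoint from all other features of the diagram. Pick a basepoint $x$ in the plane, off $D$, in general position with respect to the diagram (so that no extended segment of $D$ passes through $x$). The target is to modify $D$ by a finite sequence of $\Delta$-moves into a diagram $D'$ that is braided around $x$ in the sense of the preceding definition.

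The modification is carried out in two passes. First, for each (small) bond, I perform a local isotopy inside its disk, realized by $\Delta$-moves of the form $\Delta_2, \Delta_3$, to rotate the bond segment so that it lies on a ray from $x$; the two adjacent knot segments are perturbed only inside the same small disk. Next, I apply the classical Alexander PL algorithm to the knot portion: for each knot segment $[A_i,A_{i+1}]$ oriented clockwise with respect to $x$, I pick a vertex $A'$ in the complementary half-plane so that the triangle $[A_i,A',A_{i+1}]$ meets $D$ only along $[A_i,A_{i+1}]$, and replace $[A_i,A_{i+1}]$ by $[A_i,A']\cup[A',A_{i+1}]$ via a $\Delta_1$-move. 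If the triangle hits another feature of $D$, I subdivide $[A_i,A_{i+1}]$ further and iterate; since $D$ has only finitely many crossings, bonds, and edges, this terminates.

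With $D'$ braided around $x$, I cut along a generic ray $r$ emanating from $x$ chosen to avoid every crossing, bond, and trivalent vertex; unrolling the complement (as in \autoref{fig:braided}) produces a bonded braid $\beta\in M_n$ with $n=\#(r\cap D')$. Each crossing of $D'$ contributes a $\sigma_i^{\pm 1}$ according to its sign and the radii of the two strands involved, each radial bond between neighboring concentric strands contributes a $b_i$, and by construction $\hat\beta = K$.

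The main obstacle is the interaction between the two passes: the $\Delta_1$-moves used to braidify the knot arcs could in principle disturb the already-radialized bonds, and the bond endpoints might end up at non-adjacent radii, in which case they would not correspond to a single generator $b_i$. The first issue is handled by choosing each auxiliary vertex $A'$ far from the (tiny) bond disks so that the triangle avoids them entirely. The second issue is handled by interleaving the passes: each bond is radialized only after its two adjacent knot strands have been moved into their final concentric positions, and, if the two bond endpoints sit at non-adjacent radii, additional $\Delta$-moves on the nearby knot arcs are applied to bring them onto consecutive concentric circles before rotating the bond onto a ray through $x$.
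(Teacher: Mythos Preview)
Your argument follows the same strategy as the paper: isolate and shrink the bonds, place each one radially with respect to a chosen centre $x$, then run the classical PL Alexander sweep on the knot arcs and read off the braid word. Two points deserve sharpening, however.

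First, in the Alexander step the essential requirement is that the triangle $[A_i,A',A_{i+1}]$ \emph{contains} $x$ (this is what forces the two replacement edges to be counter-clockwise; your phrase ``complementary half-plane'' does not say this), and that the two new edges be taken entirely over, or entirely under, every arc they meet in projection. Your formulation ``the triangle meets $D$ only along $[A_i,A_{i+1}]$, else subdivide'' reads as though you want the \emph{projected} triangle to avoid the rest of the diagram; that is in general impossible when $x$ is surrounded by arcs, and no amount of subdivision repairs it. The paper states the over/under choice explicitly.

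Second, your adjacency concern is legitimate, but the remedy is simpler than the interleaving you describe (which, incidentally, contradicts your own two-pass ordering). Once the $\Delta$-move vertices are chosen generically so that the new edges miss each tiny radial bond segment, no strand can cross the bond's angular ray at a radius strictly between the two bond endpoints; hence the endpoints automatically land on adjacent strands of the braid. This is exactly how the paper disposes of the issue, via its general-position clause, without any post-hoc $\Delta$-moves.
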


\begin{proof}

Let $K$ be piece-wise linear bonded knot in $S^3$ and let $D$ be a diagram of $K$. 

We showed that in a diagram, we can always isolate a (parallel) bond, from where we can move its endpoints close together via planar isotopy, so that we can represent it by a single segment $[B_i, B'_i]$ (i.e. there exists a sequence of $\Delta$-moves that reduces the number of segments of the bond to a single segment).
Next, we rotate the bond inside a small disk $d$, as presented in \autoref{fig:rotate}, so that it is in braiding form (colinear with $x$) along with its adjacent segments (with possible subdivisions).

\begin{figure}
    \centering
    \includegraphics[width=0.4\linewidth, page=20]{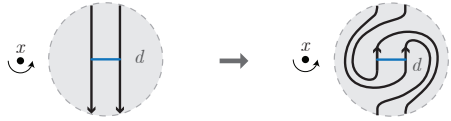}
    \caption{Rotating a bond inside a small disk $d$, so it is braided with respect to the braiding point $x$.}
    \label{fig:rotate}
\end{figure}

Finally, we take each segment $[A_i, A_{i+1}]$ that is not braided, i.e. it is not anti-clockwise with respect to $x$ and make a delta move $\Delta_{A_i,A_{i+1}}^{A_i'}: [A_i, A_{i+1}] \rightarrow [A_i, A_i', A_{i+1}]$, so that the open triangle $\mathrm{Int\,}\triangle([A_i, A_i', A_{i+1}])$ contains $x$, on the diagram this can be achieved so that the arcs $[A_i, A_i']$ and $[A_i', A_{i+1}]$ both contain only over-passes or only under-passes. By a general position argument, the point $A_i'$ can be always chosen in such a way that it does not intersect any crossing, vertex, or bond (in the case it intersects a bond, we can use planar isotopy to make the bond arbitrarily small).

\begin{figure}[ht]
    \centering
    \includegraphics[width = .4\textwidth, page=21]{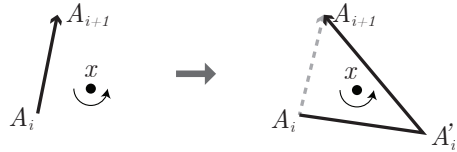}
    \caption{A delta move containing the braiding point $x$ maps a non-braided segment to two braided segments.}
    \label{fig:delta_braid}
\end{figure}

After iterating this process for all non-braided segments, we are left with a braided bonded knot from where we can read the corresponding braid word. 
\end{proof}

We now argue that the set of relations given in Definition \ref{def:mn} suffice to generate bonded braid isotopy.
\begin{proposition}\label{prop:relations}
    The relations in Definition \ref{def:mn} define bonded braid isotopy.
\end{proposition}
\begin{proof}
Let our bonded braid be defined by a braid diagram. Let $\beta$ and $\beta'$ be the bonded braid diagrams which represent the same element in $M_n$ and let $\{\beta_s \mid s \in I\}$ be a family of bonded braid diagrams that join $\beta$ and $\beta'$. 

We subdivide the interval $I = (0,1)$ into finitely many subintervals $(s_i, s_{i+1})$ such that, for each $i$, the family $\{ \beta_s \mid s \in (s_i, s_{i+1}) \}$ undergoes only regular braid isotopy without singularities in the diagram and do not contribute any relations. Transitions between adjacent subintervals are allowed to exhibit one of the following singular events:
\begin{enumerate}
    \item[(1)] two bonds or crossings exchanging relative height,
    \item[(2)] a strand passes over or under a bond or a crossing,
    \item[(3)] the local creation or cancellation of crossings.
\end{enumerate}

All possible cases of (1) are described by relations (R2), (B1), (M1), and (M2) in \autoref{fig:braid-relations}. All possible cases of (2) is described by (R3), (M3) and (M4) in \autoref{fig:braid-relations}. Creation or cancellation of crossings (3) appear in pairs and are  described by $\sigma_i \sigma^{-1} = \sigma^{-1} \sigma_i = 1$ (R1). 
\end{proof}
\begin{theorem}[Alexander's theorem for the rigid case]
Every rigid bonded knot $K \in S^3$ can be represented as the closure $\hat \beta$ of an element $\beta$ of the Rigid Bonded braid monoid $RM_n$. \label{thm:rigid_akexander}
\end{theorem}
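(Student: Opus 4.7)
The plan is to mirror the proof of \autoref{thm:topological_akexander}, substituting the appropriate moves for the rigid setting. Starting from a PL diagram of a rigid bonded knot $K$ with isolated bonds, I would first apply planar isotopy and $\Delta$-moves to reduce each bond to a single segment which, together with its two adjacent knot segments, lies on a ray from the chosen braiding point $x$. In contrast with the topological case, where move V permits us to freely reverse the parallelity of a bond, the rigid vertex condition constrains this local reorientation and may force the introduction of kink generators $k_i$ on strands adjacent to the bond, which is precisely why $RM_n$ includes these extra generators.

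Next, I would turn to the remaining non-braided segments of $K$. For each segment $[A_i, A_{i+1}]$ that is not oriented anticlockwise around $x$, I perform a $\Delta$-move $\Delta_{A_i,A_{i+1}}^{A_i'}$ whose open triangle contains $x$, as in the topological proof. To preserve the rigid structure that has been built up in the small disks around the bonds and kinks, I would appeal to Lemma 2.1.2 of \cite{birman1974braids}, which guarantees that the sequence of $\Delta$-moves required to braid a negative segment can be chosen so as to avoid any prescribed convex region on the diagram. Taking these forbidden convex regions to be the small disks containing the isolated bonds (and their attendant kinks) ensures that the braiding step does not disturb the rigid-vertex configurations already placed.

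After iterating over all segments, we obtain a braided diagram from which a braid word over $\sigma_i^{\pm 1}$, $b_i$, and $k_i$ can be read off. The last task is to certify that this word really lies in $RM_n$, i.e. that no $k_i^{-1}$ is required and that the kink can always be placed on the designated side of the bond. For this I would invoke the two observations preceding the theorem: the sequence of Reidemeister II and $\Delta$-moves pictured in the first displayed diagram converts a positive bonded kink to a negative one (and vice versa), and the sequence involving moves $\mathrm{V_R}$ and II pictured in the second diagram relocates a kink from the left side of a bond to the right.

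The main obstacle, and the only genuine deviation from the topological argument, is the careful treatment of the interaction between the rigid vertex condition and the braiding step: one must check that the local kinks introduced at bonds are not undone, nor new spurious twists created, during the $\Delta$-move phase. Both issues are resolved by the avoidance property from \cite{birman1974braids} together with the kink-normalization moves noted above, so that the braiding algorithm from \autoref{thm:topological_akexander} goes through verbatim and yields the desired element $\beta \in RM_n$ with $\hat\beta = K$.
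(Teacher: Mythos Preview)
Your proposal is correct and follows essentially the same approach as the paper: both arguments reuse the braiding procedure of \autoref{thm:topological_akexander}, invoke Lemma~2.1.2 of \cite{birman1974braids} to keep the $\Delta$-moves away from the convex regions containing the isolated bonds, and then apply the two displayed normalization moves (sign-flipping and left-to-right relocation of kinks) to ensure the resulting word lies in $RM_n$. Your write-up is in fact more explicit than the paper's, which simply asserts that ``by the same arguments'' the theorem holds; in particular, you spell out why kinks appear at all (the unavailability of move~V to reverse bond parallelity) and why Birman's avoidance lemma is the right tool to protect the already-braided bond neighborhoods.
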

\begin{proof} 
A similar braiding technique as in the proof of \autoref{thm:topological_akexander} can be applied to rigid bonded knots, with the additional challenge of handling regions containing kinks. This  can be resolved with the help of Lemma 2.1.2 from \cite{birman1974braids}, which ensures that during the braiding process, there exists a sequence of $\Delta$-moves that transforms any negative segment into positive ones while avoiding any convex regions of the diagram -- in our case, regions containing kinks.




We note here that in the set of generators for the rigid bonded braid group, $RB_n$, we do not need the negative kink, since we can use the braiding algorithm to convert a positive bonded kink to a negative one using the following sequence of Reidemesiter II moves and $\Delta$ moves:
\begin{center}
\includegraphics[scale=0.75, page=37]{braid.pdf}    
\end{center}
In addition, if the bond kink appears on the left-hand side of the diagram, we can move it to the right side via moves RV and II:
\begin{center}
\includegraphics[scale=0.75, page=13]{reid.pdf}    
\end{center}
\end{proof}
The same construction in the proof of Proposition \ref{prop:relations} shows that the relations are defining.

\section{A Burau representation for the bonded and rigid bonded monoid}

The \emph{standard Burau representation} is a linear representation of the braid group $B_n$, given by a homomorphism
$$\psi_n: B_n \to \mathrm{GL}_{n}(\mathbb{Z}[t,t^{-1}]),$$
which arises from the action of $B_n$ on the first homology group of the infinite cyclic cover of the $n$-punctured disk.
Explicitly, the standard Burau representation is given by mapping \cite{burau1935zopfgruppen}
\begin{equation}
\sigma_i \mapsto A_i=
\left(
\begin{array}{c:cc:c}
I_{i-1} & 0 & 0 & 0 \\
\hdashline
0 & 1 - t & t & 0 \\
0 & 1     & 0 & 0 \\
\hdashline
0 & 0     & 0 & I_{n-i-1}
\end{array}
\right). \label{map:burau}
\end{equation}
The goal of this section is to construct a linear representation 
$$\psi: M_n \to \mathrm{GL}_{n}(\mathbb{Z}[t,t^{-1}, z]),$$
of bonded braid monoid.
Since $B_n$ is a subgroup of  $M_n$, we still assume the map \eqref{map:burau},
$$\psi(\sigma_i) = A_i \in \mathrm{GL}_{n}(\mathbb{Z}[t,t^{-1}, z]),$$
for $\sigma_i$ generators.
In order to satisfy relations (B1) and (M1), we assume the representation of $b_i$ is of the form
$$
b_i \mapsto B_i=
\left(
\begin{array}{c:cc:c}
I_{i-1} & 0 & 0 & 0 \\
\hdashline
0 & x & y & 0 \\
0 & z & w & 0 \\
\hdashline
0 & 0     & 0 & I_{n-i-1}
\end{array}
\right). \label{map:burau2}
$$
At this stage, the representation is required to satisfy only (M2), (M3), and (M4), i.e.
$$
\begin{array}{rcl}
\psi (\sigma_i b_i) & = & \psi ( b_i \sigma_i) \\
\psi (\sigma_{i+1} \sigma_i b_{i+1}) & = & \psi(b_i \sigma_{i+1} \sigma_i) \\
\psi(\sigma_{i} \sigma_{i+1} b_{i}) & = & \psi(b_{i+1}\sigma_{i} \sigma_{i+1}) \\
\end{array}
\quad \Longrightarrow \quad
\begin{array}{rcl}
A_i B_i & = & B_iA_i \\
A_{i+1}A_i B_{i+1} & = & B_i A_{i+1}A_i \\
A_i A_{i+1} B_i & = & B_{i+1} A_i A_{i+1} \\
\end{array}
$$
We obtain the following system of equations:
\begin{align*}
A_i B_i - B_iA_i =
\left(
\begin{array}{c:cc:c}
0 & 0 & 0 & 0 \\
\hdashline
0 & -y + t z & t (w - x - y) + y & 0 \\
0 & -w + x + (-1 + t) z & y - t z & 0 \\
\hdashline
0 & 0     & 0 & 0
\end{array}
\right)
&= 0,\\
A_{i+1}A_i B_{i+1} - B_i A_{i+1}A_i =
(t-1)
\left(
\begin{array}{c:cc:c}
0 & 0 & 0 & 0 \\
\hdashline
0 & -1 + x + y & 0 & 0 \\
0 & -1 + w + z & 0 & 0 \\
\hdashline
0 & 0     & 0 & 0
\end{array}
\right)
&= 0,\\
A_i A_{i+1} B_i - B_{i+1} A_i A_{i+1} =
(1-t)
\left(
\begin{array}{c:cc:c}
0 & 0 & 0 & 0 \\
\hdashline
0 & -1 + x + t z & t (-1 + w) + y & 0 \\
0 & 0 & 0 & 0 \\
\hdashline
0 & 0     & 0 & 0
\end{array}
\right)
&= 0,
\end{align*}


The system is linearly underdetermined with one degree of freedom, one possible solution is:
$$x=1-tz, \quad y=tz, \quad z=z, w = 1-z.$$
The \emph{Burau representation of $BB_n$} is thus given by the maps

\begin{align*}
\sigma_i &\mapsto A_i =
\left(
\begin{array}{c:cc:c}
I_{i-1} & 0     & 0 & 0 \\
\hdashline
0       & 1 - t & t & 0 \\
0       & 1     & 0 & 0 \\
\hdashline
0       & 0     & 0 & I_{n-i-1}
\end{array}
\right), \\[1em]
b_i &\mapsto B_i =
\left(
\begin{array}{c:cc:c}
I_{i-1} & 0         & 0     & 0 \\
\hdashline
0       & 1 - tz    & tz    & 0 \\
0       & z         & 1 - z & 0 \\
\hdashline
0       & 0         & 0     & I_{n-i-1}
\end{array}
\right).
\end{align*}
Since the relations for $b_i$ and $k_i$ are independent, we can define a Burau representation for $RB_n$,
$$\psi_R: RB_n \to \mathrm{GL}_{n}(\mathbb{Z}[t,t^{-1}, z, \check z])$$
by:
\begin{align*}
\sigma_i &\mapsto A_i =
\left(
\begin{array}{c:cc:c}
I_{i-1} & 0     & 0 & 0 \\
\hdashline
0       & 1 - t & t & 0 \\
0       & 1     & 0 & 0 \\
\hdashline
0       & 0     & 0 & I_{n-i-1}
\end{array}
\right), \\[1em]
b_i &\mapsto B_i =
\left(
\begin{array}{c:cc:c}
I_{i-1} & 0         & 0     & 0 \\
\hdashline
0       & 1 - tz    & tz    & 0 \\
0       & z         & 1 - z & 0 \\
\hdashline
0       & 0         & 0     & I_{n-i-1}
\end{array}
\right), \\[1em]
k_i &\mapsto C_i =
\left(
\begin{array}{c:cc:c}
I_{i-1} & 0         & 0     & 0 \\
\hdashline
0       & 1 - t\check z    & t\check z    & 0 \\
0       & \check z         & 1 - \check z & 0 \\
\hdashline
0       & 0         & 0     & I_{n-i-1}
\end{array}
\right).
\end{align*}

The Burau representations of the monoids extend naturally to the groups $BB_n$ and $RB_n$.
Since

$$A^{-1}_i =
\left(
\begin{array}{c:cc:c}
I_{i-1} & 0         & 0     & 0 \\
\hdashline
0       & 0    & 1    & 0 \\
0       & t^{-1}         & 1 - t^{-1} & 0 \\
\hdashline
0       & 0         & 0     & I_{n-i-1}
\end{array}
\right)$$
and
$$B^{-1}_i = \frac{1}{1-z-tz}
\left(
\begin{array}{c:cc:c}
I_{i-1} & 0         & 0     & 0 \\
\hdashline
0       & 1-z    & -tz    & 0 \\
0       & -z         & 1-tz & 0 \\
\hdashline
0       & 0         & 0     & I_{n-i-1}
\end{array}
\right),$$
there exists a Burau representation to the general linear group over the ring $R$, where $1-z-tz$ is invertible, i.e. $R=\mathbb{Z}[t, t^{-1}, z, (1-z-tz)^{-1}]$. Explicitly, there exists a linear representation
$$\psi': BB_n \to \mathrm{GL}_{n}(\mathbb{Z}[t,t^{-1}, z, (1-z-tz)^{-1}]),$$
defined by the same matrices as $\psi$.

By the same argument, there exists a linear representation
$$\psi_R': RB_n \to \mathrm{GL}_{n}(\mathbb{Z}[t,t^{-1}, z, \check z, (1-z-tz)^{-1},(1-\hat z-t \hat z)^{-1}]),$$
defined by the same maps as $\psi_R$.


\subsection{The reduced bonded Burau representation}

We now turn to the construction of the reduced bonded Burau representation, which refines the previously defined bonded Burau representation by factoring out the trivial one-dimensional  subrepresentation, analogous to the classical reduction introduced by Burau. The techniques in the original construction of the reduced Burau representation \cite{burau1935zopfgruppen} applies naturally in the bonded setting as well.

\begin{theorem}
Let $n \geq 3$, and let $\sigma_i, b_i \in M_n$. The reduced bonded Burau representation is given explicitly by the mapping
for the braid generators:
\begin{align}
\sigma_1 &
\mapsto A_1' =
\begin{pmatrix}
-t & 0 & 0 \\
1 & 1 & 0 \\
0 & 0 & I_{n-3}
\end{pmatrix}, \qquad
\sigma_{n-1}
\mapsto A_{n-1}' =
\begin{pmatrix}
I_{n-3} & 0 & 0 \\
0 & 1 & t \\
0 & 0 & -t
\end{pmatrix},
\notag
\\
\sigma_{i} & \mapsto 
A_i' =
\begin{pmatrix}
I_{i-2} & 0 & 0 & 0 & 0 \\
0 & 1 & t & 0 & 0 \\
0 & 0 & -t & 0 & 0 \\
0 & 0 & 1 & 1 & 0 \\
0 & 0 & 0 & 0 & I_{n-i-2}
\end{pmatrix}\quad \text{for \; $1 < i < n-1$}. \notag
\end{align} 

and for the bond generators:

\begin{align}
b_1 &
\mapsto B_1' =
\begin{pmatrix}
-tz - z + 1 & 0 & 0 \\
z & 1 & 0 \\
0 & 0 & I_{n-3}
\end{pmatrix}, \qquad
b_{n-1}
\mapsto B_{n-1}' =
\begin{pmatrix}
I_{n-3} & 0 & 0 \\
0 & 1 & tz \\
0 & 0 & -tz - z + 1
\end{pmatrix},
\notag
\\
b_{i} &\mapsto 
B_i' =
\begin{pmatrix}
I_{i-2} & 0 & 0 & 0 & 0 \\
0 & 1 & tz & 0 & 0 \\
0 & 0 & -tz - z + 1 & 0 & 0 \\
0 & 0 & z & 1 & 0 \\
0 & 0 & 0 & 0 & I_{n-i-2}
\end{pmatrix}\quad \text{for \; $1 < i < n-1$}. \notag
\end{align}


\end{theorem}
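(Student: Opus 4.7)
The plan is to derive the reduced bonded Burau representation by passing to the quotient of the unreduced representation $\psi$ constructed in the previous subsection by a common one-dimensional invariant subrepresentation, so that the defining relations of $M_n$ descend for free.

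First I would observe that the all-ones vector $v_0 = (1, 1, \ldots, 1)^T$ is a common fixed vector of every generator matrix. Indeed, on the non-trivial $2\times 2$ block of $A_i$ one has $(1-t) + t = 1$ and $1 + 0 = 1$, and on the block of $B_i$ one has $(1-tz) + tz = 1$ and $z + (1-z) = 1$. Thus $\mathbb{Z}[t, t^{-1}, z] \cdot v_0$ is a common invariant line, and $\psi$ descends to a well-defined quotient representation $\bar\psi$ of rank $n-1$.

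Next, I would compute $\bar\psi$ in the basis of the quotient given by the images of the partial sums $f_j := e_1 + e_2 + \cdots + e_j$ for $j = 1, \ldots, n-1$; note that $f_n = v_0$ becomes zero in the quotient, which supplies a useful boundary convention. A column-by-column computation, unpacking $e_i = f_i - f_{i-1}$ and collecting terms, yields, with the conventions $f_0 = f_n = 0$, the formulas
\[
\bar\psi(\sigma_i)\colon\ f_j \mapsto f_j \ (j \neq i), \qquad f_i \mapsto t\, f_{i-1} - t\, f_i + f_{i+1},
\]
\[
\bar\psi(b_i)\colon\ f_j \mapsto f_j \ (j \neq i), \qquad f_i \mapsto tz\, f_{i-1} + (1-tz-z)\, f_i + z\, f_{i+1}.
\]
Writing these as matrices in $\{f_1, \ldots, f_{n-1}\}$ reproduces $A_i'$ and $B_i'$ in all three index ranges: for $1 < i < n-1$ the generic block appears at rows and columns $i-1, i, i+1$; for $i = 1$ the convention $f_0 = 0$ removes one row and leaves the off-diagonal entries $-t$ and $-tz-z+1$ at position $1$; for $i = n-1$ the convention $f_n = 0$ leaves them at position $n-1$.

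Once the matrices in the statement are identified with $\bar\psi(\sigma_i)$ and $\bar\psi(b_i)$, the relations (R1)–(R3), (B1), and (M1)–(M4) for $\{A_i', B_i'\}$ hold automatically, since relations already verified in the unreduced $\psi$ are preserved under restriction/quotient to any invariant subrepresentation. The main obstacle is the boundary bookkeeping at $i = 1$ and $i = n-1$: the substitutions $f_0 = 0$ and $f_n = 0$ must be tracked carefully to confirm that exactly the asymmetric entries $-t$ and $-tz - z + 1$ displayed in the statement emerge, and one should cross-check with a direct verification of a single instance of (M3) or (M4) at the boundary (say in the case $n = 4$) to catch any sign or position error introduced by the elimination of $f_n$.
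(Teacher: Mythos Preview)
Your proposal is correct and is essentially the same as the paper's proof: the paper conjugates by the upper-triangular all-ones matrix $C$, whose columns are precisely your partial-sum vectors $f_1,\ldots,f_n$, and then reads off the upper-left $(n-1)\times(n-1)$ block of the resulting block-lower-triangular form, which is exactly the quotient by $\langle v_0\rangle=\langle f_n\rangle$ that you compute directly. The boundary cross-check you suggest is harmless but unnecessary, since, as you already argue, the relations descend automatically from the unreduced representation.
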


\begin{proof}
Let us first show how the reduced presentation is obtained. For all $i = 1, \dots, n-1$ we have:
$$
C^{-1} A_i C =
\begin{pmatrix}
A_i' & 0 \\
\ast_i & 1
\end{pmatrix},
$$
where $C = C_n$ is the $n \times n$ matrix
$$
C =
\begin{pmatrix}
1 & 1 & \cdots & 1 \\
0 & 1 & \cdots & 1 \\
0 & 0 & \ddots & \vdots \\
0 & 0 & \cdots & 1
\end{pmatrix},
$$
and $\ast_i$ is the row vector in $\mathbb{Z}^{n-1}$ given by:
$$
\ast_i =
\begin{cases}
(0, \dots, 0), & i < n-1 \\
(0, \dots, 0, 1), & i = n-1.
\end{cases}
$$
Similarly, for all $i = 1, \dots, n-1$ we have:
$$
C^{-1} B_i C =
\begin{pmatrix}
B_i' & 0 \\
\ast_i & 1
\end{pmatrix},
$$
with $\ast_i$ as above, but for $i = n-1$ given by $(0, \dots, 0, z)$. For each $i = 1, \dots, n-1$, define:
$$
A_i'' :=
\begin{pmatrix}
A_i' & 0 \\
\ast_i & 1
\end{pmatrix},
\quad
B_i'' :=
\begin{pmatrix}
B_i' & 0 \\
\ast_i & 1
\end{pmatrix}.
$$
In order to prove the theorem, it suffices to show that:
$$
A_i C = C A_i'', \quad \text{and} \quad B_i C = C B_i''.
$$
We observe that both $A_i C$ and $C A_i''$ are obtained from $C$ by the same row operations:
\begin{itemize}
    \item replace the entry in row $i$, column $i$, with $1 - t$,
    \item replace the entry in row $i+1$, column $i$, with $1$.
\end{itemize}
Similarly, $B_i C$ and $C B_i''$ are obtained from $C$ by:
\begin{itemize}
    \item replacing entry $(i,i)$ with $1 - tz$,
    \item replacing entry $(i+1,i)$ with $z$.
\end{itemize}

Thus, $A_i C = C A_i''$ and $B_i C = C B_i''$, proving the conjugation formulas.\\
\\
Since the matrices $A_1, \dots, A_{n-1}$ and $B_1, \dots, B_{n-1}$ satisfy the braid relations (with bonded extensions), their conjugates under $C^{-1} (-) C$ also satisfy these relations.

From the decomposition:
$$
C^{-1} A_i C = \begin{pmatrix} A_i' & 0 \\ \ast_i & 1 \end{pmatrix}, \quad
C^{-1} B_i C = \begin{pmatrix} B_i' & 0 \\ \ast_i & 1 \end{pmatrix},
$$
we see that the matrices $A_i'$ and $B_i'$ also satisfy braid-type relations.

Thus, we obtain reduced representation:
$$
\psi_n^r: M_n \longrightarrow \mathrm{GL}_{n-1}(\Lambda), \quad \sigma_i \mapsto A_i', \quad b_i \mapsto B_i',
$$
which we call the \emph{reduced bonded Burau representation}.

For $n = 2$, these reduce to:
$$
\psi_2^r : M_2 \longrightarrow \mathrm{GL}_1(\Lambda), \quad \sigma_1 \mapsto -t, \quad b_1 \mapsto -tz - z + 1.
$$

\end{proof}

\begin{corollary}
The (bonded) Burau representation is reducible for all $n \geq 2$.
\end{corollary}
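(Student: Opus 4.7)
The plan is to exhibit an explicit proper nonzero common invariant subspace of the matrices $\{A_i, B_i\}_{i=1}^{n-1}$, which will show that $\psi_n$ is reducible. The cleanest candidate, valid uniformly in $n \geq 2$, is the one-dimensional subspace spanned by the all-ones column vector $\mathbf{1} = (1, 1, \ldots, 1)^T$, so the whole argument reduces to checking that $\mathbf{1}$ is a common $1$-eigenvector of every generator.

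To carry this out, I would simply inspect row sums. Each row of $A_i$ sums to $1$: the identity rows give $1$ trivially, while the two non-trivial rows contribute $(1-t)+t = 1$ and $1 + 0 = 1$. Similarly, each row of $B_i$ sums to $1$: the non-trivial rows contribute $(1-tz)+tz = 1$ and $z + (1-z) = 1$. Hence $A_i \mathbf{1} = \mathbf{1}$ and $B_i \mathbf{1} = \mathbf{1}$ for every $i$, so $\mathrm{span}(\mathbf{1})$ is a proper nonzero $\psi_n$-invariant subspace, and $\psi_n$ is reducible.

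Alternatively, for $n \geq 3$ this invariant line can be read off directly from the proof of the preceding theorem without any fresh computation: the last column of the change-of-basis matrix $C$ is precisely $\mathbf{1}$, and the last column of each conjugate $C^{-1} A_i C$ and $C^{-1} B_i C$ equals $(0, \ldots, 0, 1)^T$ by the displayed block form, which translates back to $A_i \mathbf{1} = \mathbf{1}$ and $B_i \mathbf{1} = \mathbf{1}$. There is essentially no obstacle here; the only point worth flagging is that when $n = 2$ the block structure of the theorem is vacuous, so one falls back on the direct row-sum check applied to the single matrices $A_1$ and $B_1$, and the corollary follows in both regimes.
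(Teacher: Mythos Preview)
Your proof is correct and identifies exactly the same invariant line as the paper: the span of the all-ones vector $\mathbf{1}$, which in the paper's notation is $\langle C e_n\rangle$. The only difference is presentational---the paper routes the verification through the block form $C^{-1}A_iC$, $C^{-1}B_iC$ established in the preceding theorem (and treats $n=2$ by a separate explicit conjugation), whereas your direct row-sum check handles all $n\ge 2$ uniformly and is a slight streamlining.
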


\begin{proof}
Let $\Lambda = \mathbb{Z}[t^{\pm 1}, z]$ and consider the topological Burau representations:
$$
\psi_n: B_n \longrightarrow \mathrm{GL}_n(\Lambda), \quad \sigma_i \mapsto A_i, \quad b_i \mapsto B_i.
$$

\noindent \textbf{Case 1: $n \geq 3$.} Let $e_1, \dots, e_n$ be the standard basis of $\Lambda^n$, and consider the $\Lambda$-submodule $\langle C e_n \rangle$, where $C = C_n$ is the upper-triangular matrix with all ones on and above the diagonal:
$$
C =
\begin{pmatrix}
1 & 1 & \cdots & 1 \\
0 & 1 & \cdots & 1 \\
0 & 0 & \ddots & \vdots \\
0 & 0 & \cdots & 1
\end{pmatrix}.
$$
Using the previous theorem, for both braid and bond generators we have:
$$
C^{-1} A_i C = \begin{pmatrix} A_i' & 0 \\ \ast_i & 1 \end{pmatrix}, \quad
C^{-1} B_i C = \begin{pmatrix} B_i' & 0 \\ \ast_i & 1 \end{pmatrix}.
$$
Then:
$$
A_i C e_n = C \begin{pmatrix} A_i' & 0 \\ \ast_i & 1 \end{pmatrix} e_n = C e_n,
\quad
B_i C e_n = C \begin{pmatrix} B_i' & 0 \\ \ast_i & 1 \end{pmatrix} e_n = C e_n.
$$
So the submodule $\langle C e_n \rangle$ is invariant under both the braid and bond parts of the representation, implying reducibility.

\medskip

\noindent\textbf{Case 2: $n = 2$.} 
Let
$$
U_1 = \begin{pmatrix} 1 - t & t \\ 1 & 0 \end{pmatrix}, \quad
B_1 = \begin{pmatrix} 1 - tz & tz \\ z & 1 - z \end{pmatrix}, \quad
C = \begin{pmatrix} 1 & 1 \\ 0 & 1 \end{pmatrix}.
$$
We have:
$$
C^{-1} U_1 C = \begin{pmatrix} 1 & -1 \\ 0 & 1 \end{pmatrix}
\begin{pmatrix} 1 - t & t \\ 1 & 0 \end{pmatrix}
\begin{pmatrix} 1 & 1 \\ 0 & 1 \end{pmatrix}
= \begin{pmatrix} -t & 0 \\ 1 & 1 \end{pmatrix},
$$

$$
C^{-1} B_1 C = \begin{pmatrix} 1 & -1 \\ 0 & 1 \end{pmatrix}
\begin{pmatrix} 1 - tz & tz \\ z & 1 - z \end{pmatrix}
\begin{pmatrix} 1 & 1 \\ 0 & 1 \end{pmatrix}
= \begin{pmatrix} -tz - z + 1 & 0 \\ z & 1 \end{pmatrix}.
$$
Now consider the submodule $\langle C e_2 \rangle$. Then:
$$
U_1 C e_2 = C \begin{pmatrix} -t & 0 \\ 1 & 1 \end{pmatrix} e_2 = C e_2,
\quad
B_1 C e_2 = C \begin{pmatrix} -tz - z + 1 & 0 \\ z & 1 \end{pmatrix} e_2 = C e_2.
$$
Thus, the submodule $\langle C e_2 \rangle$ is invariant in both the braid and bond cases for $n = 2$ as well.

\medskip

In all cases $n \geq 2$, we have a proper, nontrivial $\Lambda$-submodule preserved by the (bonded) Burau representation. Hence, the representation is reducible.
\end{proof}

We now turn to the question of faithfulness for the reduced bonded Burau representation.

\medskip

\noindent For $n=1$ it is obvious that it is faithful. 

\begin{proposition}
The reduced bonded Burau representation is faithful for $ n = 2 $; that is, the homomorphism
$$
\psi_2^r : M_2 \longrightarrow \mathrm{GL}_1(\mathbb{Z}[t^{\pm1}, z])
$$
given by
$$
\sigma_1 \mapsto -t, \quad b_1 \mapsto 1 - z - t z
$$
is injective.
\end{proposition}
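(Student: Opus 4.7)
The plan is to first pin down the structure of $M_2$ completely, and then read off injectivity of $\psi_2^r$ from a one line degree argument.

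\textbf{Step 1: Structure of $M_2$.} In the two-strand setting, the only generators of $M_n$ that remain are $\sigma_1,\sigma_1^{-1}$ and $b_1$. The relations (R2), (R3), (B1), (M1), (M3), (M4) are all vacuous, since they require either two indices differing by at least $2$ or an index $i+1\le n-1=1$. The relations that actually impose something on $M_2$ are (R1), $\sigma_1\sigma_1^{-1}=\id_2$ (and its symmetric form making $\sigma_1^{-1}$ a true two-sided inverse), and (M2), $\sigma_1 b_1=b_1\sigma_1$. From (M2), conjugating by $\sigma_1^{-1}$ yields $\sigma_1^{-1}b_1=b_1\sigma_1^{-1}$. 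Hence $M_2$ is abelian, and every word can be rewritten uniquely in the normal form $\sigma_1^a b_1^k$ with $a\in\mathbb{Z}$ and $k\in\mathbb{Z}_{\ge 0}$. In other words, $M_2\cong\mathbb{Z}\times\mathbb{Z}_{\ge 0}$ as a monoid.

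\textbf{Step 2: Image of the normal form.} Since $\psi_2^r$ is a monoid homomorphism, the element $\sigma_1^a b_1^k$ maps to
$$
(-t)^{a}\,(1-z-tz)^{k}\ \in\ \mathbb{Z}[t^{\pm 1},z].
$$
Injectivity of $\psi_2^r$ is therefore equivalent to the statement that distinct pairs $(a,k)\in\mathbb{Z}\times\mathbb{Z}_{\ge 0}$ produce distinct elements of the ring.

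\textbf{Step 3: Degree argument.} Suppose $(-t)^{a}(1-z-tz)^{k}=(-t)^{a'}(1-z-tz)^{k'}$ in the integral domain $\mathbb{Z}[t^{\pm 1},z]$. Assume, without loss of generality, that $k'\ge k$; moving the unit $(-t)^{a'}$ to the left gives
$$
(-t)^{a-a'}=(1-z-tz)^{k'-k}.
$$
Viewed as polynomials in $z$ over $\mathbb{Z}[t^{\pm 1}]$, the left-hand side has $z$-degree $0$, while the right-hand side has $z$-degree $k'-k$ with leading coefficient $(-1-t)^{k'-k}$, which is nonzero in $\mathbb{Z}[t^{\pm 1}]$. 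Hence $k'=k$, and then $(-t)^{a-a'}=1$ in $\mathbb{Z}[t^{\pm 1}]$, which forces $a=a'$.

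\textbf{Main obstacle.} There is no hard computation here; the only genuine content is the identification of $M_2$ with $\mathbb{Z}\times\mathbb{Z}_{\ge 0}$. The subtle point to verify carefully is that none of the three-strand relations (R3), (M3), (M4) sneak in extra identifications, and that (M2) together with the two-sided inverse property of $\sigma_1$ suffices to force commutativity of $\sigma_1^{-1}$ with $b_1$. Once this is in hand, the polynomial identity is immediate from the factoriality of $\mathbb{Z}[t^{\pm 1},z]$ and a degree count in $z$.
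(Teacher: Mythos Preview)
Your proof is correct and rests on the same core computation as the paper's: both observe that any element of $M_2$ maps to $(-t)^a(1-z-tz)^k$ and then recover $(a,k)$ from this expression using that $1-z-tz$ is not a unit (the paper) or has $z$-degree one (your version).

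Where you differ is in rigor rather than strategy. The paper only checks that $\psi_2^r(w)=1$ forces $w$ trivial, which for a \emph{monoid} homomorphism is not by itself enough for injectivity; you instead establish the normal form $\sigma_1^a b_1^k$ for $M_2$ and show directly that distinct pairs $(a,k)$ have distinct images. This is the correct level of care, and your Step~3 degree argument in $z$ simultaneously yields uniqueness of the normal form (so you need not assert it separately in Step~1). The paper's argument can be patched in the same way, so the two proofs are morally identical, with yours filling a small gap.
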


\begin{proof}
Let $ w \in M_2 $ be any word in the generators $ \sigma_1^{\pm 1}, b_1 $, and suppose $\psi_2^r(w) = 1 \in \mathbb{Z}[t^{\pm1}, z]$.
Since
$$
\psi_2^r(\sigma_1) = -t \quad \text{(a unit)}, \quad
\psi_2^r(b_1) = 1 - z - t z \quad \text{(not a unit)},
$$
any word $ w $ that maps to $ 1 $ must contain no $ b_1 $ letters (since $ 1 - z - t z $ is not invertible), and
therefore be a power of $ \sigma_1 $, i.e. $ w = \sigma_1^k,\; k\in\mathbb{Z} $

But:
$$
\psi_2^r(\sigma_1^k) = (-t)^k = 1 \iff k = 0
$$
in $ \mathbb{Z}[t^{\pm1}] $. Therefore, $ w $ must be the trivial word.

Hence, the representation is injective.
\end{proof}




It was shown in \cite{long1993burau} and \cite{bigelow1999burau} that the Burau representation of the Artin braid group is not faithful for $ n = 5 $ and for all $ n \geq 6 $, respectively. Therefore, the bonded Burau representation is also not faithful for $ n \geq 5 $. 
For $n=3$, it was shown in \cite{dasbach2000faithful} that the representation for the singular braid monomial $SB_n$ \cite{gemein1997singular} is faithfull for three strands. Since $SB_n$ is isomorphic to $M_n$ and the representation defined in \cite{dasbach2000faithful} for $SB_n$ is isomorphic to ours, it follows that reduced bonded Burau representation is faithful for $n = 3$.  The faithfulness for $n = 4$, as in the classical case, remains unknown.

    



\section{Markov theorems}

A fundamental result in braid theory is the Markov theorem, which provides necessary and sufficient conditions for two braids to represent the same link -- that is, it characterizes when their closures yield equivalent links. We begin by recalling the classical version of this theorem.

\begin{theorem}[\cite{markoff1936uuber, birman1974braids}]
    Let $\beta_1 \in B_n$ and $\beta_2 \in B_m$ be two bonded braids.
    Their closures $\hat{\beta_1}$ and $\hat{\beta_2}$  are equivalent knots if and only if  $\beta_1$ and $\beta_2$ are related by a finite sequence of the following operations:
    \begin{enumerate}
        \item conjugation: $\beta \rightarrow \sigma_i^{\pm 1} \beta \sigma_i^{\mp 1}$, where $\beta \in B_n$ and $1 \leq i < n$,
        \item stabilization: $\beta \leftrightarrow \beta \sigma_n^{\pm 1}$, where $\beta \in B_n$ and $\beta \sigma_n^{\pm 1} \in B_{n+1}$. 
    \end{enumerate}
\end{theorem}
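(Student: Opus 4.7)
The statement in question is the classical Markov theorem, so the proof proposal outlines the standard strategy from the literature cited (Markoff \cite{markoff1936uuber}, Birman \cite{birman1974braids}), which the subsequent bonded analogues of this section will presumably generalize.

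The plan is to prove both directions separately. The forward implication, that braids related by Markov moves have equivalent closures, is geometric and routine. Conjugation $\beta \mapsto \sigma_i^{\pm 1} \beta \sigma_i^{\mp 1}$ corresponds to a cyclic relabeling of the closing arcs in $\widehat{\beta}$, which is a planar isotopy of the closure diagram. Stabilization $\beta \leftrightarrow \beta \sigma_n^{\pm 1}$ attaches a single new strand that, upon closure, forms a Reidemeister I--type kink which can be smoothed. Both operations thus preserve the ambient isotopy class of the closure.

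For the reverse direction, I would follow the approach in \cite{birman1974braids}. Given $\beta_1 \in B_n$ and $\beta_2 \in B_m$ whose closures $\widehat{\beta_1}$ and $\widehat{\beta_2}$ are ambient isotopic links, Reidemeister's theorem yields a finite sequence of Reidemeister moves (and planar isotopies) connecting their diagrams $D_1$ and $D_2$. The strategy is to produce a sequence of intermediate braids $\beta_1 = \gamma_0, \gamma_1, \dots, \gamma_N = \beta_2$ such that each $\gamma_k$ is obtained from the $k$-th intermediate diagram via Alexander's braiding algorithm (\autoref{thm:topological_akexander} in the unbonded case), and such that consecutive $\gamma_k, \gamma_{k+1}$ differ by Markov moves. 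This requires two ingredients: (i) braiding is well-defined up to Markov moves, i.e.\ two different braidings of the same diagram (with different braiding axes or different choices of $\Delta$-moves) are Markov-equivalent; and (ii) each elementary Reidemeister move on a braided diagram lifts to a sequence of conjugations and stabilizations on the associated braid word.

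The main obstacle is step (i) together with the compatibility of (ii): the Alexander braiding involves choices (braid axis $x$, order of $\Delta$-moves, which non-braided segments to flip first) and one must show that any two choices yield braids that differ by conjugations and stabilizations. The standard way to handle this is to introduce an intermediate class of moves, either Morton's \emph{threading} construction or Lambropoulou's \emph{L-moves}, which are local braid-level operations that trivially generate link equivalence on closures and which decompose into finite sequences of Markov conjugations and stabilizations. Once such an intermediate calculus is established, one tracks the braid axis across the Reidemeister sequence and verifies by a finite case analysis (one case per Reidemeister move, sorted by the orientation of the strands relative to the braid axis) that each local change is realized by an L-move, hence by Markov moves. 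This case analysis, while elementary, is the technically delicate part of the argument and is what the bonded generalizations later in this section will have to extend to accommodate the new generators $b_i$ (and $k_i$).
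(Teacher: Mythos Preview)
The paper does not supply a standalone proof of this classical theorem; it is stated with citation to \cite{markoff1936uuber, birman1974braids}, followed by the remark that complete proofs appear in Birman, Morton, Yamada, and Vogel. However, embedded in the proof of the subsequent bonded Markov theorem the paper does summarize the classical argument in detail, and that summary is the scaffold the bonded extension is built on, so it is worth comparing your outline against it.

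Your proposal is correct in outline but differs in emphasis from the route the paper actually takes. You lead with Birman's approach from \cite{birman1974braids} (track a Reidemeister sequence, rebraid each intermediate diagram, show consecutive braidings differ by Markov moves) and invoke Morton's threading or L-moves only as a device for handling the well-definedness of the braiding step. The paper instead commits fully to Morton's threading construction, as presented in Burde--Zieschang \cite{burde2002knots}: one marks alternating start/finish points $S,F$ on the diagram cutting it into overpasses and underpasses, threads a simple closed curve $h$ separating $S$ from $F$ with prescribed crossing conventions, and straightens $h$ into the braid axis. Fixing $h$ determines the braid up to conjugation; changing $h$ is analyzed via local moves $\Delta', \Delta'', \Delta'''$ (pushing $h$ across arcs and crossings of $K$), which reduce to braid isotopy and stabilization; and two threadings that are not isotopic in $\mathbb{R}^2\setminus(S\cup F)$ are reconciled by pushing pairs of oppositely oriented arcs of one over the points $f_i$. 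Finally one checks that the Reidemeister moves on $K$ are compatible with the threading. This threading framework is precisely what the bonded proof modifies (by restricting where bonds may sit relative to $S$, $F$, and $h$, and by adding a new move $\Delta^{(4)}$ for $h$ passing through a bond), so aligning your classical sketch with Morton's version rather than Birman's would make the generalization you anticipate more direct.
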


We call two braids $\beta \in B_n$ and $\beta' \in B_{n'}$ \emph{Markov equivalent} if they are connected through braid isotopy, conjugation and stabilization.

The Markov theorem was first formulated in \cite{markoff1936uuber}, where a sketch of the proof was presented. A complete and rigorous proof was later given by Birman \cite{birman1974braids}. While Birman’s approach is thorough, it relies on manipulating the combinatorics of $\Delta$-moves. Since then, more geometrically motivated and arguably more accessible proofs have been developed by Morton \cite{morton1986threading}, Yamada \cite{yamada1987minimal}, and Vogel \cite{vogel1990representation}.

\begin{theorem}[The Markov theorem for topological bonded braids]
    Let $\beta_1 \in B_n$ and $\beta_2 \in B_m$ be two bonded braids.
    Their closures $\hat{\beta_1}$ and $\hat{\beta_2}$  are equivalent knots if and only if  $\beta_1$ and $\beta_2$ are related by a finite sequence of the following operations:
    \begin{enumerate}
    \item conjugation: $\beta \rightarrow \sigma_i^{\pm 1} \beta \sigma_i^{\mp 1}$, where $\beta \in M_n$ and $1 \leq i < n$,
    \item cyclic permutation of bonds: $\beta b_i \leftrightarrow b_i \beta$, where $\beta \in M_n$ and $1 \leq i < n$,
    \item stabilization: $\beta \leftrightarrow \beta \sigma_n^{\pm 1}$, where $\beta \in M_n$ and $\beta \sigma_n^{\pm 1} \in M_{n+1}$. 
    \end{enumerate}
\end{theorem}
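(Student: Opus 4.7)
The plan is to follow the Birman--Markov strategy, adapted to the bonded setting by combining the bonded Alexander theorem (Theorem~\ref{thm:topological_akexander}) with the isolated-bond and parallel-bond lemmas proved earlier.

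The easy direction, that the three operations preserve the isotopy class of the closure, is immediate from the geometric pictures: classical conjugation by $\sigma_i^{\pm 1}$ and stabilization $\beta \leftrightarrow \beta \sigma_n^{\pm 1}$ preserve the closure exactly as in the ordinary braid setting (sliding the extra $\sigma_i^{\pm 1}$ or $\sigma_n^{\pm 1}$ around the closure arcs), while the cyclic permutation $\beta b_i \leftrightarrow b_i \beta$ corresponds to the planar isotopy that slides the bond generator $b_i$ once around the closure circle.

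For the hard direction, suppose $\widehat{\beta_1}$ and $\widehat{\beta_2}$ are ambient isotopic as topological bonded knots. By the isolated-bond and parallel-bond lemmas, the corresponding diagrams can be connected by a finite sequence of moves $\mathrm{I_B}, \mathrm{II_B}, \mathrm{III_B}, \mathrm{VI_B}, \mathrm{VI_B'}$, keeping all bonds isolated and parallel throughout. We argue by induction on the length of this sequence: at each step we apply the bonded braiding algorithm from the proof of Theorem~\ref{thm:topological_akexander} to the intermediate diagram, and verify that the resulting bonded braid differs from its predecessor by a sequence of Markov moves. For $\mathrm{I_B}, \mathrm{II_B}, \mathrm{III_B}$, which take place entirely on knot arcs, the classical Markov argument applies verbatim and produces conjugations and stabilizations. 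For the bond-sliding moves $\mathrm{VI_B}$ and $\mathrm{VI_B'}$, the effect on the braid word is precisely one of the mixed relations (M1)--(M4), so the braid is unchanged as an element of $M_n$. In all cases, intermediate $\Delta$-moves introduced by the braiding algorithm are handled exactly as in the classical proof (following Birman or Vogel), giving additional conjugations and stabilizations.

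The main obstacle is the \emph{cut-ray ambiguity} inherent in Alexander's braiding procedure: to read off a braid word from a braided diagram, one must choose a ray $r$ from the braiding point $x$, and different choices of $r$ place a given bond at different positions in the word. Comparing two choices of cut ray, any bond lying between them gets shifted from one end of the word to the other---precisely a cyclic permutation $\beta b_i \leftrightarrow b_i \beta$. This explains why move~2 is necessary as an independent axiom: because bond generators are not invertible in $M_n$, such a shift cannot be realized by conjugation inside the monoid. A careful case analysis, tracking each bond through the braiding algorithm as the ambient isotopy is performed, shows that every cut-ray discrepancy is resolved by the cyclic permutation move, and that together with the classical Markov moves and the defining relations of $M_n$ this suffices to relate any two braid presentations of equivalent bonded knots.
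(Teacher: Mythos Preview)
Your strategy differs from the paper's: the paper adapts Morton's threading argument (as in Burde--Zieschang), fixing a threading curve $h$, showing that a fixed $h$ determines the braid up to conjugation and bond cycling, and then analyzing how changes of $h$ and the bonded Reidemeister moves affect the braid. You instead propose a Birman-style induction directly on the Reidemeister sequence, re-braiding after each move. In principle this is a legitimate alternative route, but as written it has real gaps.

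First, your move list is incomplete: the isolated-bond lemma requires $\mathrm{V_B}$ as well, and the parallel-bond lemma only says every diagram is equivalent to a parallel one, not that isotopies can be realized entirely within the parallel class without $\mathrm{V_B}$. Second, and more seriously, your claim that $\mathrm{VI_B}$ and $\mathrm{VI_B'}$ reduce to the mixed relations (M1)--(M4) is wrong in general. Those relations only cover the case where the strand sliding over the bond is already oriented compatibly with the braiding direction; when it is oriented the other way, the diagram after $\mathrm{VI_B}$ is no longer braided, and re-braiding the offending arc forces a stabilization, not merely a relation in $M_n$. (The paper runs into exactly this orientation dichotomy in its own analysis of $\mathrm{VI_B}$ and resolves the bad case with a stabilization.) Third, the phrase ``the classical Markov argument applies verbatim'' for $\mathrm{I_B},\mathrm{II_B},\mathrm{III_B}$ hides the entire difficulty: neither Birman's nor Vogel's proof proceeds by naive induction on Reidemeister moves, and in the bonded setting the re-braiding $\Delta$-moves after each step may sweep across bonds, so one must argue that this interaction produces only allowed moves. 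Your final paragraph acknowledges a ``careful case analysis'' is needed but does not supply it; as it stands this is a plausible outline, not a proof.
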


We call two braids $\beta \in M_n$ and $\beta' \in M_{n'}$ \emph{bonded Markov equivalent} if they are connected through braid isotopy, conjugation, cyclic permutations of bonds, and stabilization.

\begin{proof}
Our approach will follow that of Burde-Zieschang \cite{burde2002knots}, where Morton’s proof \cite{morton1986threading} is presented in a more modern form. We will first summarize Morton's proof using notation from \cite{burde2002knots} for the classical case and then adapt it to topological bonded braids.

The braiding procedure begins with a diagram $D$ of an oriented knot $K$, and proceeds by selecting two finite, alternating subsets of \emph{intersection points} on the diagram which must not lie on a crossing, $S=\{s_1,s_2,\ldots,s_n\}$ and
$F=\{f_1,f_2,\ldots,f_n\}$. The points in $S$ and $F$ mark \emph{starting} and \emph{ending} points of \emph{overpasses}, respectively. 
We assume that the points $S \cup F$ appear consecutively along the diagram, following its orientation, in the order $s_1, f_1, s_2, f_2, \ldots, s_n, f_n$ so that every subarc from $s_i$ to $f_i$ do not contain underpasses (they are overpasses), and the segments between $f_i$ and $s_{(i+1) \pmod n}$ do not contain overpasses (they are underpasses). Both types of subarcs may be trivial, i.e.\ they contain no overpasses or underpasses.
Note that consecutive indexing is not strictly necessary, but it simplifies the notation in some of the arguments.

It is easy to see (\cite{burde2002knots}, page 165) that there always exist a simple closed curve $h$ in the plane of $D$ with the following conditions: 
\begin{itemize}
    \item $h$ meets $D$ transversely and does not intersect crossings or intersection points,
    \item $h$ separates the set $S$ and $F$,
    \item $h$ intersects each overpass $s_i f_i$ and each underpass $s_i f_{(i+1) \pmod n}$ exactly once,
    \item $h$ crosses open overpasses of $D$ from left to right and open underpasses of $D$ from right to left as depicted in \autoref{fig:leftright}, where we also eqip $h$ with overpasses and underpasses.
\end{itemize}
We call such an $h$ the \emph{threading of $D$} with respect to $S$ and $F$.

\begin{figure}[ht]
    \centering
    \includegraphics[page=45, scale=1.0]{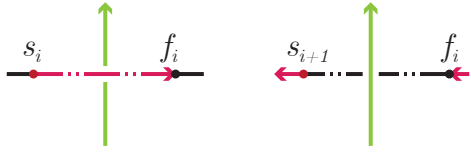}
    \caption{Orientations of crossings between the threading curve $h$ and $D$.}
    \label{fig:leftright}
\end{figure}

\medskip

From here, we can straighten $h \cup K$, so that $h$ forms a line through infinity and we obtain the knot $K$ braided along $h$, i.e. in the cylinder with axis $h$, the polar coordinate of $K$ is monotonically increasing by construction (for more details, see \cite{burde2002knots}, where it is also explained how to obtain the braid word from the threading). A full example, from where the whole braiding process is visible is given in \autoref{fig:threading}. Described above is the Alexander theorem for braids.

\begin{figure}[ht]
    \centering
    \includegraphics[page=39, scale=1.0]{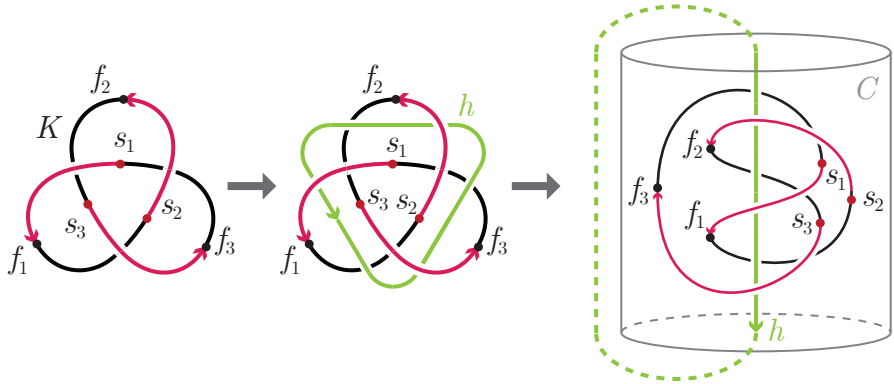}\label{leftright}
    \caption{The threading and braiding process for knots.} 
    \label{fig:threading}
\end{figure}

The reasoning behind the Markov theorem is as follows: if the threading axis $h$ is fixed, then the braid represented by $K \subset \mathbb{R}^3 - h$ (or $K \subset C - h$) is determined uniquely up to braid isotopy and conjugation in the braid group (\cite{morton1986threading}, p. 248, Section 2, complete proof in \cite{morton1978infinitely}).

It remains to study what happens when we keep $K$ fixed and study the consequences of changing $h$.

The threading $h$ may be isotoped over an arc of $K$ by the moves $\Delta'$ depicted in \autoref{fig:change_of_ha}. It may be also isotoped over a crossing of $K$ by moves $\Delta''$ depicted in \autoref{fig:change_of_hb}. The threading $h$ may be also passed through a crossing by move $\Delta'''$ depicted in \autoref{fig:change_of_h3a}. On all diagrams the intersections point markers are omitted.

The $\Delta'$ moves add an extra loop around the threading $h$ and can be expressed as braid isotopy and stabilization (see Figure \autoref{fig:stab}), while the moves $\Delta''$ are isotopies of the closed braid. The move $\Delta'''$ is just a sequence of moves $\Delta'$ and $\Delta''$ (\autoref{fig:change_of_h3b}).

\begin{figure}[ht]
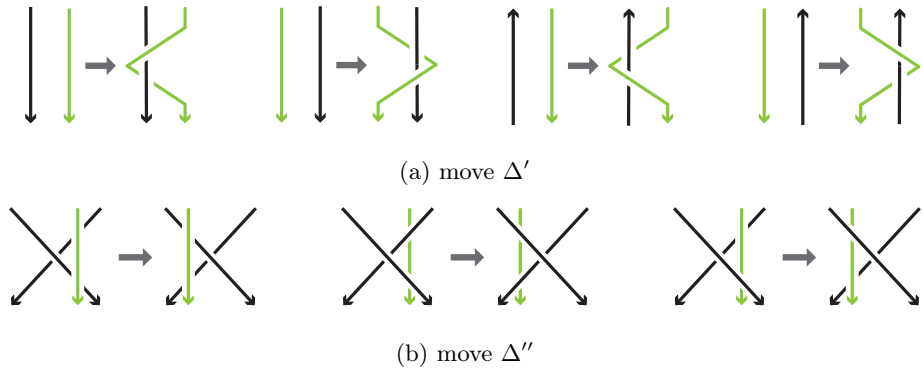

    \centering
    \begin{subfigure}[b]{\textwidth}\centering
        \includegraphics[page=40, scale=1.0]{braid.pdf}
        \caption{moves $\Delta'$}\label{fig:change_of_ha}
    \end{subfigure}
    \\
    \centering
    \begin{subfigure}[b]{\textwidth}\centering
        \includegraphics[page=49, scale=1.0]{braid.pdf}
        \caption{moves $\Delta''$}\label{fig:change_of_hb}
    \end{subfigure}
    \caption{Moves on $h$ when it passes a strand of $K$ (a) or a crossing of $K$ (b).} 
    \label{fig:change_of_h}
\end{figure}

\begin{figure}[ht]
    \centering
    \begin{subfigure}[b]{0.3\textwidth}\centering
        \includegraphics[page=50, scale=1.0]{braid.pdf}
        \caption{move $\Delta'''$}\label{fig:change_of_h3a}
    \end{subfigure}
    \begin{subfigure}[b]{0.6\textwidth}\centering
        \includegraphics[page=43, scale=1.0]{braid.pdf}
        \caption{move $\Delta'''$ can be expressed by $\Delta'$ and $\Delta''$ moves}\label{fig:change_of_h3b}
    \end{subfigure}
    \caption{Moves on $h$ when it passes a strand of $K$ (a) or a crossing of $K$ (b).} 
    \label{fig:change_of_h3}
\end{figure}

\begin{figure}[ht]
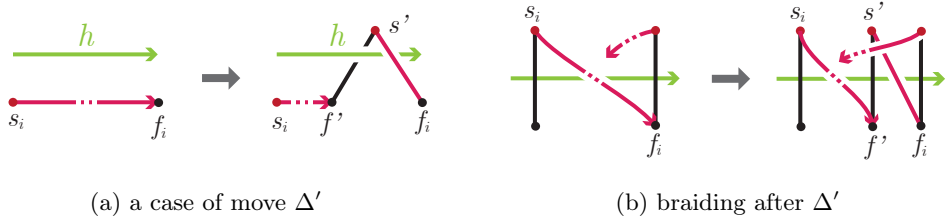

    \centering
    \begin{subfigure}[b]{0.45\textwidth}\centering
        \includegraphics[page=57, scale=1]{braid.pdf}
        \caption{a case of move $\Delta'$}\label{fig:change_of_ha1}
    \end{subfigure}
    \begin{subfigure}[b]{0.45\textwidth}\centering
        \includegraphics[page=58, scale=1]{braid.pdf}
        \caption{braiding after $\Delta'$}\label{fig:change_of_hb1}
    \end{subfigure}
    \caption{The move $\Delta'$ has the effect of adding a loop around $h$ (a) and is expressed as stabilization after straightening the braid (b).} 
    \label{fig:stab}
\end{figure}

In general, the isotopy that moves $h$ into another axis, $h'$, will not be an isotopy in $\mathbb{R}^2 \setminus (S \cup F)$. Let the axis $h$ be straightened and let $h'$ be another threading that separates $S$ and $F$, see the example in \autoref{fig:pusha}. We can draw parallel horizontal rays $r_i$ from points in $F$ towards infinity away from $h$ and observe that there are always an even number of intersections between $r_i$ and $h'$ and two arcs consecutively intersecting $r_i$ are always oppositely oriented . We can now iterate the process of pushing a pair of oppositely oriented arcs over the points $f_i$ as depicted in \autoref{fig:pushb}. This process of pushing two oppositely oriented arcs is an isotopy of the closed braid. After there are no more intersections with the rays, we obtain a threading $h''$ that is isotopic to $h$ in $\mathbb{R} - (S \cup F)$. 
The last thing to check is that the the Reidemeister moves of $K$ do not interfere with the threading, which is easy to check (see \cite{burde2002knots}, page 169). 
This finishes the proof for the classical Markov theorem.

\begin{figure}[ht]
    \centering
    \begin{subfigure}[b]{0.45\textwidth}\centering
        \includegraphics[page=51, scale=1]{braid.pdf}
        \caption{two non-isotopic threadings in $\mathbb{R}^2 - (S \cup F)$}\label{fig:pusha}
    \end{subfigure}
    \begin{subfigure}[b]{0.45\textwidth}\centering
        \includegraphics[page=44, scale=1]{braid.pdf}
        \caption{pushing $h'$ over $f_i$}\label{fig:pushb}
    \end{subfigure}
    \caption{Moves on $h$ when it passes a strand of $K$ (a) or a crossing of $K$ (b).} 
    \label{fig:push}
\end{figure}

Let us now consider the case of topological bonded knots. For the threading we proceed almost entirely the same as in the classical case, but we add the following three conditions:
\begin{enumerate}
    \item the intersection points $S$ and $F$ lie on the knotted part of the bonded knot (and not on the bonds),
    \item bond vertices lie entirely on overpasses $s_i f_i$ that do not contain any crossings,
    \item the threading curve $h$ must not intersect the bond. 
\end{enumerate}

A full example of a threading of a bonded knot is presented in \autoref{fig:threada}. The straightening procedure from where we can read the braid word $\beta$ of the bonded knot $\hat \beta$ is presented in \autoref{fig:threadb} and \autoref{fig:threadc}.

\noindent \emph{Remark.} There are several alternative ways we can define the threading rules above (e.g., requiring that bonds lie on underpasses or that the bond vertices must be contained in the set $F$), but we have chosen the version that appears to be the simplest and leads to the clearest arguments.

Let $D$ be now a bonded knot diagram of a bonded knot $K$. A separating curve $h$ following the new rules exists: if $h'$ is a separating curve on the diagram with bonds removed (which exist due to Morton), we can make a local change depicted in \autoref{fig:hh} to $h'$ to obtain a separating curve $h$ of the diagram with bonds attached which follows rules 1, 2, and 3.

\begin{figure}[ht]
\centering
       \includegraphics[page=62, scale=1]{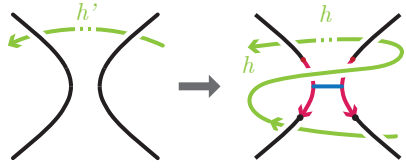}
        \caption{Obtaining a bonded threading $h$ from the threading $h'$ of the diagram with bonds removed.}\label{fig:hh}
\end{figure}

Since in bonds lie only on the overstrands without crossings, they will contain no crossings once we straighten the threading curve.
Straightening $h$ gives rise to a bonded knot braided along the axis $h$ inside a cylinder $C$, which gives an alternative proof of \autoref{thm:topological_akexander}. 


As before, fixing $h$ determines the braid of $K$ up to bonded braid isotopy, conjugation and cyclic permutations of bonds.

\begin{figure}[ht]
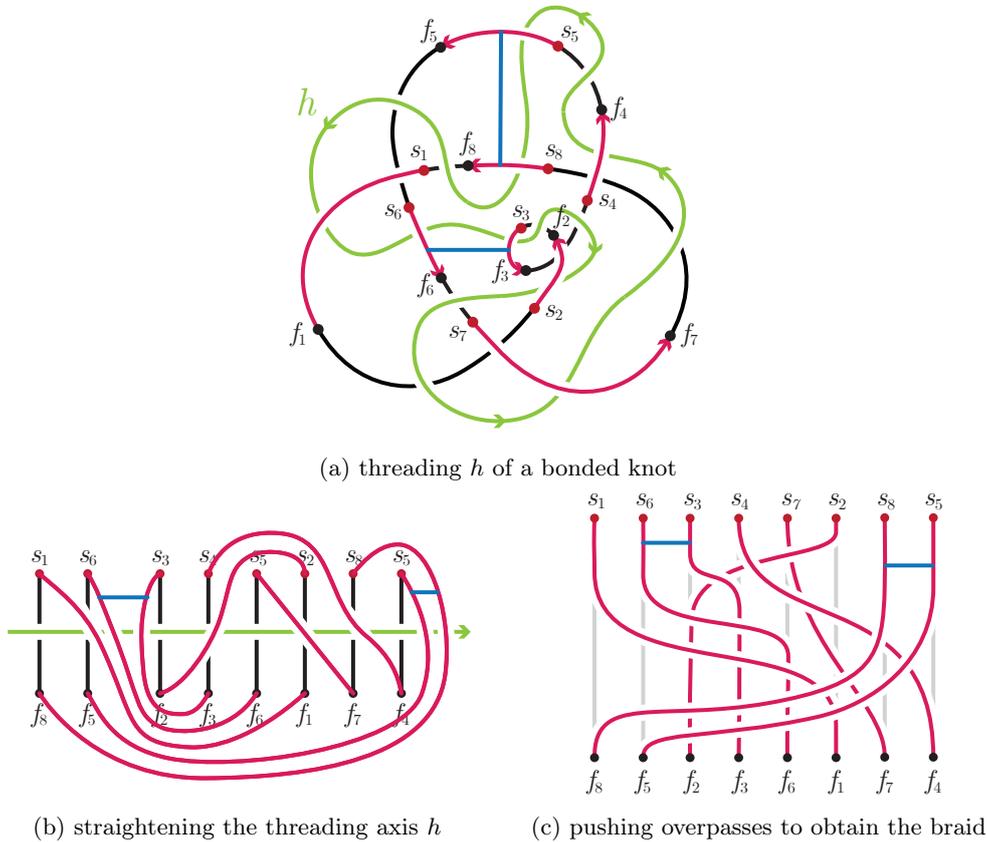

    \centering
\centering
    \begin{subfigure}[b]{0.9\textwidth}\centering
        \includegraphics[page=59, scale=1]{braid.pdf}
        \caption{threading $h$ of a bonded knot}\label{fig:threada}
    \end{subfigure}
    \begin{subfigure}[b]{0.45\textwidth}\centering
        \includegraphics[page=60, scale=1]{braid.pdf}
        \caption{straightening the threading axis $h$}\label{fig:threadb}
    \end{subfigure}
        \begin{subfigure}[b]{0.45\textwidth}\centering
        \includegraphics[page=61, scale=1]{braid.pdf}
        \caption{pushing overpasses to obtain the braid}\label{fig:threadc}
    \end{subfigure}
    \caption{The braiding procedure and braid word reconstruction for a topologically bonded knot. The obtained braid word representing the bonded knot is 
    $\beta = b_2 \sigma_5 \sigma_4^{-1} \sigma_3^{-1} b_7 \sigma_4^{-1} \sigma_2^{-1} \sigma_4^{-1} \sigma_1^{-1} \sigma_2^{-1} \sigma_3^{-1} \sigma_6 \sigma_5 \sigma_4 \sigma_3 \sigma2 \sigma_1 \sigma_7 \sigma_6 \sigma_5 \sigma_4 \sigma_3 \sigma_2 \in M_8.$}
    \label{fig:threaddd}
\end{figure}

It again remains to see what happens if we keep $K$ fixed and study the consequences of changing $h$. In addition to the moves $\Delta'$, $\Delta''$, and $\Delta'''$, 
there is one more move $\Delta^{(4)}$, which arises when the threading curve is pushed through a bond. This move, illustrated in \autoref{fig:bond_delta}, corresponds to an isotopy of the closed braid.

\begin{figure}[ht]
    \centering
    \includegraphics[page=53, scale=1]{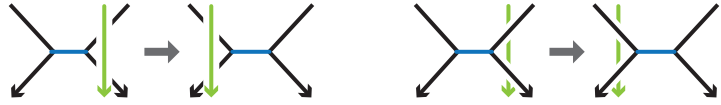}
    \caption{The move $\Delta^{(4)}$ when $h$ passes a bond.}\label{fig:bond_delta}
\end{figure}

The rest of the proof is identical to that in the classical case, except we must study the effect of threading when we perform the move $\mathrm{VI_B}$ (and $\mathrm{VI'_B}$) on $K$.
We will study only $\mathrm{VI_B}$, since $\mathrm{VI'_B}$ is analogous.
As we assumed in \autoref{sec:alexander}, the bond is parallel. In the first case we assume the overpassing arc of $\mathrm{VI'_B}$ is oriented downwards as in \autoref{fig:m4ba}.
Similarly, if the overpassing arc is oriented upwards, we have only two cases presented in \autoref{fig:m4bb}.

The move in \autoref{fig:m4ba} is just braid isotopy, while the move \autoref{fig:m4bb} is braid isotopy and stabilization as shown in \autoref{fig:m4b2}, which concludes the proof.

\begin{figure}[ht]
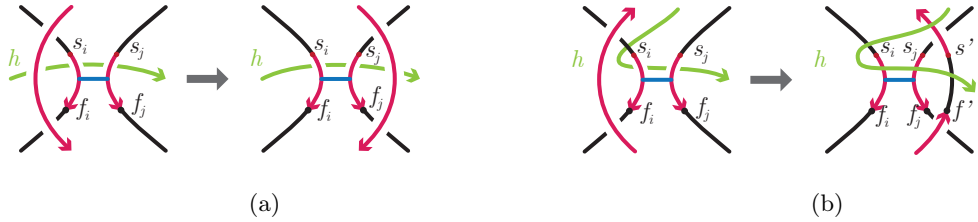

    \centering
    \begin{subfigure}[b]{0.47\textwidth}
    \includegraphics[page=54, scale=1]{braid.pdf}\caption{}\label{fig:m4ba}
    \end{subfigure}\quad
    \begin{subfigure}[b]{0.47\textwidth}
    \includegraphics[page=55, scale=1]{braid.pdf}\caption{}\label{fig:m4bb}
    \end{subfigure}
    \caption{The effect of threading under the move $\mathrm{VI_B}$.}
   \label{fig:m4b}
 \end{figure}

\begin{figure}[ht]
    \centering
    \includegraphics[page=56, scale=1]{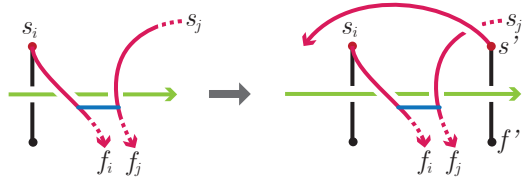}\label{fig:m4ba2}
    \caption{The braid change under the threading change of the move from \autoref{fig:m4bb}.}
    \label{fig:m4b2}
\end{figure}

\end{proof}

\noindent \emph{Remarks.} Using the same methods as above, one can derive the Markov theorem in the rigid setting as well. This version is analogous to the topological case, with the key difference that cyclic permutations of the $k$-generators are also permitted, i.e., the move of the form $\beta k_i \leftrightarrow k_i \beta$. In the case of the bonded braid group, conjugation and stabilization alone suffice to generate Markov equivalence. Although moves involving ``antibonds'' can be formally defined, they do not carry a clear physical interpretation.


\subsection*{Acknowledgments}
B. Gabrovšek was financially supported by the Slovenian Research and Innovation Agency grant N1-0278. M. Simonič and P. Cavicchioli were financially supported by the Slovenian Research and Innovation Agency grant J1-4031  and and program P1-0292. The authors declare that they have no conflict of interest.

\bibliographystyle{abbrv}
\bibliography{biblio}

\end{document}